\def\RSthmtxt{Theorem~}\newref{thm}{name = \RSthmtxt}}
\def\RSlemtxt{Lemma~}\newref{lem}{name = \RSlemtxt}}
\def\RScortxt{Corollary~}\newref{cor}{name = \RScortxt}}
\def\RSclaimtxt{Claim~}\newref{claim}{name = \RSclaimtxt}}
\def\RSproptxt{Proposition~}\newref{prop}{name = \RSproptxt}}
\numberwithin{equation}{section}
\numberwithin{figure}{section}
\theoremstyle{plain}
\newtheorem{thm}{\protect\theoremname}
  \theoremstyle{remark}
  \newtheorem{rem}[thm]{\protect\remarkname}
  \theoremstyle{definition}
  \newtheorem{defn}[thm]{\protect\definitionname}
  \theoremstyle{plain}
  \newtheorem{cor}[thm]{\protect\corollaryname}
  \theoremstyle{plain}
  \newtheorem{lem}[thm]{\protect\lemmaname}
  \theoremstyle{plain}
  \newtheorem{prop}[thm]{\protect\propositionname}
  \theoremstyle{remark}
  \newtheorem{claim}[thm]{\protect\claimname}
\date{}
\tikzset{cross/.style={cross out, draw=black, minimum size=2*(#1-\pgflinewidth), inner sep=0pt, outer sep=0pt}, cross/.default={1pt}}
\newcommand{\trans}{\mathcal{B}}
\newcommand{\clean}{\mathcal{C}}
\newcommand{\points}{\triangleright}
\newcommand{\susc}{\mathcal{S}}
\newcommand{\Z}{\mathbb{Z}}
\newcommand{\iid}{i.i.d.}
\newcommand{\rev}{rev}
  \providecommand{\claimname}{Claim}
  \providecommand{\corollaryname}{Corollary}
  \providecommand{\definitionname}{Definition}
  \providecommand{\lemmaname}{Lemma}
  \providecommand{\propositionname}{Proposition}
  \providecommand{\remarkname}{Remark}
\providecommand{\theoremname}{Theorem}
\begin{document}
\global\long\def\One{\mathds{1}}

\global\long\def\Laplacian{\Delta}

\global\long\def\grad{\nabla}

\global\long\def\norm#1{\left\Vert #1\right\Vert }

\global\long\def\zz{\mathbb{Z}}

\global\long\def\rr{\mathbb{R}}

\global\long\def\nn{\mathbb{N}}

\global\long\def\pp{\mathbb{P}}

\global\long\def\ee{\mathbb{E}}

\global\long\def\floor#1{\left\lfloor #1\right\rfloor }

\global\long\def\Intd#1#2#3#4{\int\limits _{#1}^{#2}#3\text{d}#4}

\global\long\def\intd#1#2#3#4{\int_{#1}^{#2}#3\text{d}#4}

\title{Time Scales of the Fredrickson-Andersen Model on Polluted $\zz^{2}$
and $\zz^{3}$}
\author{Assaf Shapira}
\email{assafshap@gmail.com}
\address{LPSM UMR 8001, Universit\'e Paris Diderot\protect\\CNRS, Sorbonne Paris Cit\'e}
\author{Erik Slivken}
\email{eslivken@gmail.com}
\address{CEREMADE UMR 7534, Universit\'e Paris-Dauphine\protect\\CNRS, PSL Research University}

\thanks{ This work has been supported by the ERC Starting Grant 680275 ``MALIG'', ANR-15-CE40-0020-01. 
 }

\maketitle

\section{Introduction}

The Fredrickson-Andersen $k$-spin facilitated model (FA$k$f) was
introduced in physics \cite{FA} in order to study the liquid-glass transition,
and it is a part of a family of interacting particle systems called
Kinetically Constrained Models (KCM). One may also see them as the stochastic
counterpart of a well known family of cellular automata called
Bootstrap Percolation (BP); and the Fredrickson-Andersen $k$-spin facilitated
model is the KCM corresponding to the $k$-neighbors bootstrap percolation. 

Both bootstrap percolation and the kinetically constrained models have been studied extensively in homogeneous environment, and in particular on the lattice $\zz^d$ (see, e.g., \cite{vanEnter87, AL, CerfManzo, Holroyd, BBDCM_SharpThresholdZd, BDCMS_universality2d, CMRT_kcmzoo, MT_towards_universality,MMT_universality2d}). Some results are known for the bootstrap percolation in random environments (e.g., \cite{JLTV_BPonER, BP_BPonRandomRegular, Janson_RandomRegular, BGCSP_BPonGW, Shapira_BPonGW, GravnerMcDonald_polluted, GravnerHolroyd_polluted}). However, kinetically constrained models in random environments have only been considered very recently in the mathematical literature \cite{Shapira_RandomConstraint}.

We will focus on the $2$-neighbors model on the polluted lattice, introduced
in \cite{GravnerMcDonald_polluted} for $\zz^{2}$ and recently analyzed on $\zz^{3}$ \cite{GravnerHolroyd_polluted,GravnerHolroydSivakoff_polluted3d}.
We study the divergence of time scales in this model for the stationary settings.

\section{Model, notation and main result}

We consider the random environment giving each $x\in\zz^{d}$ a quenched
variable 
\[\omega_{x}\in\left\{ \text{susceptible, immune}\right\}.
\]
These variables are
chosen in the beginning according to a measure $\nu$, which is the
product of $\iid$ Bernoulli random variables where $\nu(\omega_x = \text{immune}) = \pi$ and $\nu(\omega_x = \text{susceptible}) = 1-\pi$ for some fixed $\pi\in\left(0,1\right)$.  Once the environment is fixed, the stochastic dynamics will take place on the subset of susceptible sites, $\susc\subset \zz^d$.  

Susceptible sites with have one of two states: infected or healthy. The stochastic dynamics is defined over configurations $\Omega=\{i,h\}^\susc,$ where $i$ corresponds to an infected site, and $h$ corresponds to a healthy site.  We will denote such configurations by $\eta\in\Omega.$  We may wish to specify the state after changing the configuration on a set of sites $X$.  For $a \in \{i,h\}$, and $\eta\in \Omega$, let $\eta^{X\leftarrow a}$ denote the configuration which agrees with $\eta$ on all sites in $\susc \setminus X$ and equals $a$ on every site in $X$. For brevity, when $X$ is a single site $x$ we let $\eta^{x\leftarrow a} = \eta ^{X\leftarrow a}.$  We let $\eta^x$ denote the configuration that agrees with $\eta$ on $\susc\setminus \{x\}$ and differs from $\eta$ at $x$.  For a function $f:\Omega\rightarrow\rr$, $\grad_{x}f(\eta)=f\left(\eta^{x}\right)-f\left(\eta\right)$.

The FA dynamics will be chosen to be reversible with respect to a product measure
$\mu$, giving each site probability $q$ to be infected and $1-q$ to be
healthy for a small parameter $q\in (0,1)$. We will often take the expectation of function with respect to a single site $x$, which we dente by 
\[
\mu_{x}(f)=qf\left(\eta^{x\leftarrow{i}}\right)+\left(1-q\right)f\left(\eta^{x\leftarrow{h}}\right).
\]

In order to define the FA dynamics we need to define the constraints for $x\in \susc$:
\begin{equation}
c_{x}(\eta)=\begin{cases}
1 & x\text{ has at least }2\text{ susceptible infected neighbors}\\
0 & \text{otherwise}
\end{cases}.\label{eq:constraints}
\end{equation}
The dynamics will then follow the following rules \textendash{} each
site rings at rate $1$. If the constraint is satisfied (i.e. $c_{x}=1$)
we toss a coin (independently of everything) that gives $i$
with probability $q$ and $h$ with probability $1-q$.  Then set the state of $x$ to
the result of the coin toss. This could be equivalently described \cite{Liggett}
by the generator of the Markov semi-group defined by 
\[
\mathcal{L}f(\eta)=\sum_{x\in\susc}c_{x}(\eta)\mu_{x}f,
\]
where $f:\Omega\rightarrow\rr$ is a local function, i.e. depends
on the state of finitely many sites.  The Dirichlet form corresponding to $\mathcal{L}$ is 
\[
	\mathcal{D} f = -\mu(f \mathcal{L} f) = -\int f(\eta) \mathcal{L} f(\eta) d\mu(\eta) = q(1-q)\int \sum_{x\in \susc} c_x(\eta) (\grad_x f(\eta)) ^2 d\mu(\eta).
\]

Probabilities and expected values
with respect to this process starting at some initial state $\eta$
will be denoted by $\pp_{\eta}$ and $\ee_{\eta}$, and when starting
from equilibrium by $\pp_{\mu}$ and $\ee_{\mu}$. Though not mentioned
explicitly in the notation, these measures depend on the quenched variables, $\omega$, which describe the disorder.

Bootstrap percolation is deterministic in discrete time.  At each step $t$, sites that satisfy the constraint ($c_{x}(\eta_t)=1$)
get infected, and remain so forever.  Sites that would never be infected under the bootstrap percolation dynamics will never change their state under the corresponding KCM dynamics with the same starting configuration. 

\begin{rem}
The terminology and notation used by the KCM community is not the
same as that of the BP community, e.g., ``occupied'' and ``empty''
have an inverse meaning, as well as the labels 0 and 1. Here we chose
to use the more neutral terminology ``infected'' and ``healthy'',
hoping it will be equally confusing for readers of all backgrounds.
\end{rem}

As $q\rightarrow 0$, more and more sites are healthy, the constraint
is more difficult to satisfy, and the dynamics slows down. In order
to quantify this slowing down we should study typical time scales
of the system. One option is studying the spectral gap of the generator
(e.g. \cite{CMRT_kcmzoo}), which gives a lot of information on many time scales
of the system, and in particular the loss of correlation. In disordered
systems, however, the spectral gap tends to focus on ``bad'' parts
of the environment, giving an overly pessimistic estimation which
does not describe actual time scales of the system \cite{Shapira_RandomConstraint}.

One is then tempted to try to hide these bad regions, e.g., by removing
them from the graph and replace them with entirely healthy boundary
conditions. This choice should give, in a sense, a dynamics which
is the slowest possible (as more sites will not satisfy the constraints in this setting); and for the other bound we may take entirely
infected boundary conditions (possibly increasing the number of sites which satisfy the constraints). Unfortunately, KCMs are not attractive,
and a monotone coupling of the dynamics with hidden parts and the
original one is not possible.

In fact, the only information we can gain from such a coupling is
the finite speed of propagation of information \textendash{} if we
are interested in the process until time $t$, we may change the environment
at distances greater than $100t$ without effecting the dynamics near
the origin (see, e.g., \cite[section 3.3]{Martinelli_SaintFlourNotes}). In the analysis of the FA$1$f model on polluted $\zz^{2}$
this coupling indeed allows us to hide bad areas \cite[section 3.6]{Shapira_thesis}. However,
for the FA$2$f model this is impossible \textendash{} at distances
of the time scales we are considering, the system is not ergodic, even for entirely
infected boundary conditions.  For example, in the case of $\zz^{2}$, at distance of order $\pi^{-4}$ from the origin we will find four
corners of a rectangle that are all immune. If in the initial configuration
all sites in this rectangle are healthy, none could ever be infected,
and we cannot hope for correlations to be lost. We will see in \thmref{z2}
that the typical time scale for the evolution of the system is much
longer than $\pi^{-4}$, so we will not be able to use an argument based on the finite speed of propagation.

The way we approach this problem is by studying the infection time of
the origin. Unlike the spectral gap, this is a concrete observable,
so it will be affected by far away regions only to the extent that
the observed dynamics depends on them in practice. Moreover, the reversibility
of the process gives tools that allows us to study the Poisson problem
related to this time.

We therefore define the main quantity of this paper 
\[
\tau_{0}=\inf\left\{ t\,|\,\eta_{0}(t)=i\right\} .
\]

In the two dimensional case, \cite{GravnerMcDonald_polluted} show that when $q$ is small and
$\pi<cq^{2}$ the probability that the origin is eventually infected
is big, but when $\pi>Cq^{2}$ it is small. We will thus concentrate
on the case $\pi<cq^{2}$, taking some margins that will simplify
the analysis.
\begin{thm}
\label{thm:z2}Consider the FA2f model on $\zz^{2}$ with $\pi<q^{2+\varepsilon}$.
Then for all $\varepsilon>0$, with $\nu$-probability at least $1-5q^{\varepsilon/6}$,
$\omega$ is such that
\begin{equation*}
\pp_{\mu}\left(\tau_{0}>e^{-q^{-1-\varepsilon}}\right)  \le 3q^{\varepsilon/12}.
\end{equation*}
Moreover, in the other direction, uniformly in $\omega$, there exitst $C$ such that
\begin{equation*}
\pp_{\mu}\left(\tau_{0}<e^{-Cq^{-1}}\right) \xrightarrow{q\rightarrow0} 0	
\end{equation*}

\end{thm}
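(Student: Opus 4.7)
The plan is to prove the upper and lower bounds by rather different techniques. The upper bound requires the explicit construction of an infection mechanism that is fast on a typical environment and forms the combinatorial heart of the argument; the lower bound follows from a Dirichlet-form comparison to the unpolluted FA$2$f on $\zz^2$, for which a bottleneck bound of the desired order is already known.

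For the upper bound, I first isolate a high-probability event on the disorder. Inside a polynomial-size ball around the origin I look for an ``immune-free template'': a clean window of side $L=\lfloor q^{-1-\varepsilon/2}\rfloor$ together with a bounded-width corridor linking this window to the origin. Because $\pi L^2\le q^{\varepsilon/3}$, a fixed translate of this template is disorder-free with $\nu$-probability at least $1-q^{\varepsilon/3}$, and a union bound over the $q^{-O(1)}$ admissible positions, combined with the corridor requirement, produces an event $G$ with $\nu(G)\ge 1-5q^{\varepsilon/6}$. On $G$ I bound $\ee_\mu[\tau_0]$ via the reversibility identity $\ee_\mu[\tau_0]\le 1/\mathrm{cap}(\{\eta_0=i\},\{\eta_0=h\})$ and estimate the capacity from below by Thomson's principle, supporting a unit flow on a canonical sequence of single-site flips: (i) nucleate a critical FA$2$f seed inside the clean window, with $\mu$-cost of order $e^{O(1/q)}$; (ii) translate the seed along the corridor one site at a time, each step costing a factor $q^{-O(1)}$; (iii) finally infect the origin in an elementary step. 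Collecting the contributions of the $q^{-O(1)}$ single-site moves yields $\ee_\mu[\tau_0]\le e^{q^{-1-\varepsilon}/2}$, and Markov's inequality together with an extra slack of $q^{\varepsilon/12}$ produces the claimed tail bound.

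For the lower bound I exploit that immune sites can only restrict the set of satisfied constraints. Pointwise, $c_x^\omega(\eta)\le\bar c_x(\eta)$, where $\bar c_x$ is the clean FA$2$f constraint treating every $\zz^2$-neighbour as susceptible. Extending any $f\colon\{i,h\}^\susc\to\rr$ trivially and integrating out the immune variables, this inequality translates into $\mathcal{D}_\omega(f)\le\bar{\mathcal{D}}(\tilde f)$. A standard projection argument applied to the harmonic function for the clean problem shows that the capacity of $\{\eta_0=i\}$ in the polluted dynamics is at most that in the clean one, so the mean infection time from equilibrium in the polluted model is no smaller than the corresponding quantity on unpolluted $\zz^2$. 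The classical bottleneck lower bound \cite{CMRT_kcmzoo,MMT_universality2d} then gives this latter quantity as at least $e^{Cq^{-1}}$, and a Chebyshev-type estimate, using also that $\pp_\mu(\eta_0(0)=i)=q\to 0$, delivers the stated probability statement.

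The main obstacle is the upper bound. In the clean model, the critical droplet can nucleate anywhere and grow in any of four directions; on $G$ it is confined to a pre-specified window of side $q^{-1-\varepsilon/2}$ and must then be shepherded along the immune-free corridor to the origin without ever violating the two-neighbour FA constraint. Arranging simultaneously for $\nu(G)\ge 1-5q^{\varepsilon/6}$ and for the resulting canonical path to have Dirichlet cost at most $e^{q^{-1-\varepsilon}}$ is what fixes the exponents $2+\varepsilon$ and $\varepsilon/6$, and the combinatorial core of the proof consists in engineering the template so that every single-site move along the canonical path is legal under the constraint.
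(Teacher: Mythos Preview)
Your upper-bound strategy has a genuine gap. The identity $\ee_\mu[\tau_0]\le 1/\mathrm{cap}(\{\eta_0=i\},\{\eta_0=h\})$ is not valid: since these two sets are complementary, their capacity is simply $q(1-q)\,\mu(c_0)$, which is polynomial in $q$ and carries no information about the time scale $e^{1/q}$. More seriously, a ``canonical sequence of single-site flips'' cannot \emph{nucleate} an FA2f seed: every legal flip needs two infected neighbours, so a path in configuration space starting from a typical $\eta$ has no way to create a long infected line at cost $e^{O(1/q)}$. The paper resolves this by making the high-probability event a joint $(\omega,\eta)$ event rather than a pure disorder event: one coarse-grains into boxes of side $L=q^{-1-\varepsilon/3}$, calls a box \emph{good} if it is immune-free and every row and column already contains an infected site, and calls a good path of length $l=q^{-L-1}$ \emph{super-good} if some box already contains a fully infected line. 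The canonical path then only \emph{moves} this pre-existing infected line through the good path to the origin, changing at most $D=3L$ sites at any time; the cost $q^{-D}$ enters through the Poisson-problem identity $\mu(T_A^E\One_E)=\mathcal{D}(T_A^E)$ and a path-counting bound (the paper's Lemma and Proposition in the preparation section), not through capacity. Incidentally, with your $L=q^{-1-\varepsilon/2}$ one gets $\pi L^2<1$, not $q^{\varepsilon/3}$; the paper's $L=q^{-1-\varepsilon/3}$ is needed for that estimate.

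For the lower bound your monotonicity intuition is correct, but routing it through a Dirichlet-form/capacity comparison does not close the argument: a bound on capacities or spectral gaps does not by itself yield $\pp_\mu(\tau_0<t)\to 0$, and the ``Chebyshev-type estimate'' you invoke would need a second-moment bound you have not established. The paper instead compares directly to bootstrap percolation: immune sites can only increase $\tau_0^{\mathrm{BP}}$, the clean-$\zz^2$ estimates of Aizenman--Lebowitz give $\mu(\tau_0^{\mathrm{BP}}<100t)\to 0$ for $t=e^{Cq^{-1}}$, and a finite-speed-of-propagation coupling then converts this into $\pp_\mu(\tau_0<t)\le \mu(\tau_0^{\mathrm{BP}}<100t)+e^{-t}$. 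This is both simpler and actually delivers the probability statement.
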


For polluted environments in $\zz^3$, it is shown in \cite{GravnerHolroyd_polluted} that for $\pi$
small enough (but not going to $0$ with $q$) the BP infects the origin with high probability even when $q$ tends to $0$.  
\begin{thm}\label{thm:z3}
Consider the FA2f model on polluted $\zz^{3}$.  For all $\varepsilon>0$,
with $\nu$-probability that tends to $1$ as $\pi\rightarrow 0$ uniformly
in $q$, $\omega$ is such that
\begin{align*}
\pp_{\mu}\left(\tau_{0}>e^{-q^{-1-\varepsilon}}\right) &= o(1)
\end{align*}
In the other direction, uniformly in $\omega$, there exitst $C$
such that
\[
\pp_{\mu}\left(\tau_{0}<e^{-Cq^{-1/2}}\right)\xrightarrow{q\rightarrow0}0
\]
\end{thm}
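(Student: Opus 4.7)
The plan is to prove the two bounds in \thmref{z3} separately, building on the bootstrap percolation analysis of \cite{GravnerHolroyd_polluted} for the upper bound and on standard FA2f droplet estimates for the lower bound.

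\textbf{Upper bound.} The first step is to import the BP result from \cite{GravnerHolroyd_polluted}: for $\pi$ small enough uniformly in $q$, with $\nu$-probability tending to $1$ as $\pi\to 0$, the environment $\omega$ is such that BP started from a $\mu$-random initial configuration infects the origin with high $\mu$-probability. This reduces the question to a dynamical one: given that BP eventually infects the origin, we must show that the KCM realizes this infection within time $e^{q^{-1-\varepsilon}}$. For this I would adapt the canonical-paths / Dirichlet-form construction from the proof of \thmref{z2}: locate near the origin a critical droplet of linear size $\sim q^{-1/2}$ (the critical scale for 3D two-neighbour BP), and exhibit a sequence of legal flips that grows it until it absorbs the origin. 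In the unpolluted $\zz^3$ setting such a construction gives cost $e^{q^{-1/2}}$; in the polluted setting the droplet is repeatedly blocked by immune sites, forcing local detours that may cost an additional factor $e^{C/q}$ each. However, only a sub-polynomial (in $1/q$) number of detours is required along the droplet's path, which the factor $q^{-\varepsilon}$ in the target exponent comfortably absorbs.

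\textbf{Lower bound.} Fix any environment $\omega$. For the origin to be infected at time $t$ under the KCM, the trajectory must contain a time-ordered chain of legal flips from the initial configuration down to $0$. Each such chain projects onto a BP-infection path, and the existence of such a path reaching the origin requires the initial $\mu$-configuration to contain, in a nearby region, a mobile cluster of linear size $\Omega(q^{-1/2})$, the critical droplet size for 3D two-neighbour BP. A Peierls-type union bound over all possible chains, combined with an estimate of the attempt rate per step, then yields $\pp_{\mu}(\tau_0 < e^{Cq^{-1/2}}) \to 0$. Pollution only shrinks the class of admissible chains, since an immune site never contributes to the constraint $c_x$ in \eqref{eq:constraints}, so the bound is uniform in $\omega$.

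\textbf{Main obstacle.} The principal difficulty lies in the upper bound: certifying that the droplet construction from the unpolluted 3D case can be made robust to the sparse but arbitrarily placed immune sites. The hope is that the same bypass moves developed for the planar model in \thmref{z2} can serve as local subroutines in three dimensions, so that each immune obstacle contributes at most a factor $e^{C/q}$ to the Dirichlet/canonical-paths estimate and the number of obstacles intersecting the droplet trajectory is at most $q^{-O(1)}$, producing the desired total bound $e^{q^{-1-\varepsilon}}$. Quantifying the geometry of typical immune configurations well enough to guarantee such a trajectory exists, starting from a $\mu$-typical initial condition, is where the technical work concentrates.
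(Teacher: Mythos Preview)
Your lower bound sketch is in line with the paper's: the paper also reduces to bootstrap percolation via \lemref{upperbound_bp} and invokes the Cerf--Manzo estimates for $\tau_0^{\text{BP}}$ in unpolluted $\zz^3$, noting that pollution only makes BP slower.

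Your upper bound, however, follows a different route from the paper's, and as written it has a genuine gap. You propose to grow a three-dimensional droplet of linear size $q^{-1/2}$ and to treat immune sites as local obstacles costing $e^{C/q}$ each. None of the three quantitative claims supporting this picture is justified: you do not describe what a KCM canonical path for a 3D droplet looks like, you do not explain why a single bypass of an immune site costs exactly $e^{C/q}$ (as opposed to something worse), and you do not argue that the number of obstacles the droplet meets is sub-polynomial in $1/q$. In fact the paper explicitly remarks that a genuinely three-dimensional propagation mechanism, if it could be made to work, would yield the sharper exponent $q^{-1/2}$ matching the lower bound --- but this is left as a conjecture.

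What the paper actually does for the upper bound is to import wholesale the brick-and-sail machinery of \cite{GravnerHolroyd_polluted}. A \emph{sail} is a two-dimensional staircase surface threading through a brick of side $\Theta(L)$ with $L\sim q^{-1}\log(1/q)$; each horizontal layer of the sail has $O(L)$ sites. The core technical work (\lemref{linebyline}, \lemref{separator}, \lemref{passiton}, \lemref{allinfamily}) is to show that an infected layer of the sail can be propagated to the next layer, and from one brick to the next, by a sequence of legal KCM moves whose Hamming distance from the initial configuration never exceeds $O(L)$. Translation and cleaning sequences of bricks are introduced so that after each step the configuration outside the target brick is restored. A super-good path of bricks (one brick containing a fully infected sail layer) is then shown to exist with high $\nu\otimes\mu$-probability via oriented percolation and LSS domination. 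Feeding $D=O(L)$ and $N=O(L^2 l)$ into \lemref{path_to_time} and \propref{yoyoma} yields the bound $e^{q^{-1-\varepsilon}}$. The exponent $q^{-1}$ arises precisely because the sail is a two-dimensional object, so the propagation is effectively the 2D FA2f mechanism; this is not a byproduct of detours around pollution but the intrinsic cost of the construction.

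In short: you correctly identify the Dirichlet-form / canonical-paths framework, but the heart of the proof is the sail construction and the delicate back-and-forth moves that keep the Hamming distance at $O(L)$, none of which your proposal anticipates.
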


\begin{rem}
The exponents in the upper and lower bounds of the theorem above do not match. The reason is that the proof of upper bound uses an infection mechanism that takes place in a two dimensional surface, and indeed the power $q^{-1}$ fits the scaling of the two dimensional FA model. We conjecture, however, that in the true dynamics infection would be able to propagate in three directions, giving $q^{-1/2}$ as in the lower bound (perhaps up to logarithmic corrections).
\end{rem}

\begin{rem}
We mention for comparison the scaling of $\tau_{0}$ in the homogeneous
(non-polluted) model. In $\zz^{2}$ it scales (up to log corrections)
as $e^{1/q}$, and in $\zz^{3}$ as $e^{1/\sqrt{q}}$.
\end{rem}

\section{Preparation}

In order to prove the upper bounds we fix some high probability
event $E$, and show that the process cannot spend a lot of time in
$E$ before hitting $\left\{ \eta_{0}=0\right\} $. Since $E$ has
high probability, the process spends a lot of time in $E$ and therefore $\tau_0$ cannot be too big.  This entire section will assume the pollution, $\omega$, to be fixed.

Fix an event $E\subseteq\Omega$ and $t>0$. We will define the time spent
in $E$ by time $t$ as
\begin{equation}
T_{t}^{E}=\int_{0}^{t}\One_{E}\left(\eta\left(s\right)\right)\text{d}s.\label{eq:timespentinE}
\end{equation}
With some abuse of notation we will also considered its averaged version
\begin{equation}
T_{t}^{E}\left(\eta\right)=\ee_{\eta}\left[T_{t}^{E}\right]\label{eq:timespent_averaged}
\end{equation}
where we recall that $\ee_\eta[\cdot]$ is the expectation over the stochastic process starting from the configuration $\eta$.

For some event $A\subseteq \Omega$, $\tau_A$ denote the hitting time for this event.  

\begin{defn}
\label{def:timeinEbeforeA}Let $E,A\subseteq\Omega$ be two events.
The time spent in $E$ before hitting $A$ is
\[
T_{A}^{E}=T_{\tau_{A}}^{E}.
\]
\end{defn}

Also for $T_{A}^{E}$ we define
\begin{equation}
T_{A}^{E}\left(\eta\right)=\ee_{\eta}\left[T_{A}^{E}\right].\label{eq:timeinEbeforeA_averaged}
\end{equation}
Recall that $\mathcal{L}$ is the generator of the Markov process.  The function, $T_A^E$, solves the Poisson problem (see, e.g., \cite[equation (7.2.45)]{BovierDenHollander})
\begin{equation}
\begin{array}{cccc}
\mathcal{L}T_{A}^{E}\left(\eta\right) & = & -\One_{E}\left(\eta\right) & \quad\eta\notin A,\\
T_{A}^{E}\left(\eta\right) & = & 0 & \quad\eta\in A.
\end{array}\label{eq:poissonproblem_EbeforeA}
\end{equation}
Multiplying both sides by $T_{A}^{E}$ and integrating with respect
to $\mu$ gives
\begin{cor}
\label{cor:dirichletequalsexpectation_EbeforeA}$\mu\left(T_{A}^{E}\One_{E}\right)=\mathcal{D}\left(T_{A}^{E}\right).$
\end{cor}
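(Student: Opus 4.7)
The plan is to give a very short proof, essentially the computation already advertised in the text: multiply the Poisson equation by $T_A^E$ and integrate against $\mu$.

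First I would observe that $T_A^E$ is a well-defined bounded function on all of $\Omega$, so $\mathcal{L}T_A^E$ makes sense everywhere, even though the identity $\mathcal{L}T_A^E = -\One_E$ is only guaranteed on $\Omega\setminus A$. Since the boundary condition in the Poisson problem forces $T_A^E\equiv 0$ on $A$, the product $T_A^E \cdot \mathcal{L}T_A^E$ vanishes on $A$, and similarly $T_A^E \One_E$ vanishes on $A$. Hence
\[
\mu\bigl(T_A^E\,\mathcal{L}T_A^E\bigr)=\int_{\Omega\setminus A} T_A^E(\eta)\,\mathcal{L}T_A^E(\eta)\,d\mu(\eta)=-\int_{\Omega\setminus A} T_A^E(\eta)\,\One_E(\eta)\,d\mu(\eta)=-\mu\bigl(T_A^E\One_E\bigr),
\]
using the Poisson equation in the middle step.

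Finally, by the very definition of the Dirichlet form recalled in the excerpt, $\mathcal{D}(T_A^E)=-\mu(T_A^E\mathcal{L}T_A^E)$, so combining the two identities yields
\[
\mathcal{D}(T_A^E)=\mu\bigl(T_A^E\One_E\bigr),
\]
as claimed. There is really no obstacle here: the only thing to be mindful of is not to invoke $\mathcal{L}T_A^E=-\One_E$ on $A$, which is harmlessly handled by the vanishing boundary condition. No integrability issue arises because $T_A^E$ is bounded on the finite-time truncation that implicitly defines it (and in any case one may restrict to a finite volume and take limits, since $\mathcal{L}$ is a sum of local terms and $T_A^E$ is monotone increasing in $t$).
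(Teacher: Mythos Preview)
Your proof is correct and is exactly the argument the paper intends: multiply the Poisson equation by $T_A^E$, integrate against $\mu$, and use that $T_A^E\equiv 0$ on $A$ together with the definition $\mathcal{D}(f)=-\mu(f\mathcal{L}f)$. The paper's ``proof'' is nothing more than the sentence preceding the corollary, so your write-up is in fact more careful than the original.
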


We will use this formula in order to bound $\mu\left(T_{A}^{E}\One_{E}\right)$.
\begin{lem}
\label{lem:path_to_time}Fix $E,A\subseteq\Omega$, $N,D,V\in\nn$.
Assume that for every $\eta\in E$ there exists a sequence $\eta_{0},\dots,\eta_{N}$
of configurations and a sequence of sites $x_{0},\dots x_{N-1}$ such
that
\begin{enumerate}
\item $\eta_{0}=\eta$,
\item $\eta_{N}\in A$,
\item $\eta_{i+1}=\eta_{i}^{x_{i}}$ or $\eta_{i+1}=\eta_{i}$,
\item $c_{x_{i}}(\eta_{i})=1$,
\item for all $i\le N$, $\eta_{i}$ differs from $\eta$ on a set $X$ whose size is at most $D$, and $X$ is contained in a set $Y(x_i)$, depending only on $x_i$, whose size is at most $V$.
\end{enumerate}
Then $\mu\left(T_{A}^{E}\One_{E}\right)\le N^{2}\binom{V}{D}2^{D}q^{-D-1}$.
\end{lem}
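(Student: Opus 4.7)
The approach is a canonical-paths (Diaconis--Stroock-style) argument built on \corref{dirichletequalsexpectation_EbeforeA}. Writing $f = T_A^E$ for brevity, the corollary gives $\mu(f\One_E) = \mathcal{D}(f)$, so it suffices to bound $\mathcal{D}(f)$ by $N^2\binom{V}{D}2^D q^{-D-1}$. For every $\eta \in E$, telescoping along the prescribed path and using $f(\eta_N) = 0$ yields $f(\eta) = \sum_{i=0}^{N-1}\bigl(f(\eta_i) - f(\eta_{i+1})\bigr)$, where each summand is either $0$ (when $\eta_{i+1}=\eta_i$) or $-\grad_{x_i} f(\eta_i)$ (when $\eta_{i+1} = \eta_i^{x_i}$). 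The triangle inequality followed by Cauchy--Schwarz on $N$ terms then gives the path bound
$$
	f(\eta)^2 \;\le\; N \sum_{i=0}^{N-1}\bigl(\grad_{x_i} f(\eta_i)\bigr)^2.
$$

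Next, I integrate against $\mu\One_E$ and reorganise the double sum by reindexing the pair $(i,\eta)$ via the canonical transition $(\xi,x) = (\eta_i, x_i)$ together with the set $X$ on which $\eta$ and $\xi$ disagree. By hypothesis (5), $X \subseteq Y(x)$ with $|X|\le D$, and since $\eta$ is recovered from $\xi$ by flipping the bits of $X$, there are at most $\sum_{k=0}^D\binom{V}{k}\le 2^D\binom{V}{D}$ possible values of $\eta$ for each $(\xi,x)$. The product-Bernoulli measure ratio satisfies $\mu(\eta)/\mu(\xi)\le q^{-D}$ since they differ on at most $D$ sites, hypothesis (4) provides $c_x(\xi)=1$, and the remaining sum over $i$ contributes a further factor $N$. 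Assembling these estimates,
$$
	\mu\bigl(f^2\One_E\bigr) \;\le\; N^2\, 2^D\binom{V}{D}\, q^{-D}\sum_{\xi,x} c_x(\xi)\bigl(\grad_x f(\xi)\bigr)^2\mu(\xi) \;=\; \frac{N^2\, 2^D\binom{V}{D}\, q^{-D-1}}{1-q}\,\mathcal{D}(f).
$$

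Finally, Cauchy--Schwarz gives $\mu(f\One_E)^2 \le \mu(f^2\One_E)\mu(\One_E) \le \mu(f^2\One_E)$, and combined with the corollary this becomes $\mathcal{D}(f)^2 \le \mu(f^2\One_E)$. Dividing through by $\mathcal{D}(f)$ produces the claimed bound, with the harmless $(1-q)^{-1}$ absorbed into the constants for small $q$. The delicate step is the combinatorial change of variables: one must verify that the data $(\xi,x,X)$ uniquely recovers $\eta$, which is where condition (5) that $Y(x)$ depends only on $x$ (not on $\eta$) becomes crucial to keep the multiplicity of the transition-counting under control. Everything else is routine algebra.
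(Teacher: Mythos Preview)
Your proof is correct and follows essentially the same canonical-paths argument as the paper: telescope $f=T_A^E$ along the path, apply Cauchy--Schwarz on the $N$ terms, change variables to the transition $(\eta_i,x_i)$, bound the multiplicity by $N\binom{V}{D}2^D$ and the measure ratio by $q^{-D}$, then close using $\mu(f\One_E)=\mathcal{D}(f)$ from \corref{dirichletequalsexpectation_EbeforeA}. Your remark about the stray $(1-q)^{-1}$ is in fact sharper than the paper's own write-up, which silently absorbs it into the final inequality.
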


\begin{proof}
Consider $\eta\in E$. By (2), $T_{A}^{E}\left(\eta_{N}\right)=0$, thus,
denoting $\grad_{i}T_{A}^{E}=T_{A}^{E}\left(\eta_{i+1}\right)-T_{A}^{E}\left(\eta_{i}\right)$
\[
T_{A}^{E}\left(\eta\right)=\sum_{i=0}^{N-1}c_{x_{i}}\left(\eta_{i}\right)\grad_{i}T_{A}^{E}.
\]
Note that $\grad_{i}T_{A}^{E}$ is either $\grad_{x_{i}}T_{A}^{E}\left(\eta_{i}\right)$
or $0$ (since we allow empty moves). We can then write
\begin{align*}
\mu\left(T_{A}^{E}\One_{E}\right)^{2} & \le\mu\left(\left(T_{A}^{E}\One_{E}\right)^{2}\right)=\sum_{\eta\in E}\mu\left(\eta\right)\left(\sum_{i=0}^{N-1}c_{x_{i}}\left(\eta_{i}\right)\grad_{i}T_{A}^{E}\right)^{2}\\
 & \le\sum_{\eta\in E}\mu\left(\eta\right)\,N\sum_{i=0}^{N-1}c_{x_{i}}\left(\eta_{i}\right)\,\left(\grad_{i}T_{A}^{E}\right)^{2}\\
 & \le\sum_{\eta\in E}\sum_{i=0}^{N-1}\sum_{\eta^{\prime}}\sum_{x}\,\One_{\eta^{\prime}=\eta_{i}}\One_{x=x_{i}}\frac{\mu\left(\eta\right)}{\mu\left(\eta^{\prime}\right)}\mu\left(\eta^{\prime}\right)\,Nc_{x}\left(\eta^{\prime}\right)\,\left(\grad_{x}T_{A}^{E}\left(\eta^{\prime}\right)\right)^{2}\\
 & \le Nq^{-D}\sum_{\eta^{\prime}}\sum_{x}\sum_{\eta\in E}\sum_{i=0}^{N-1}\,\One_{\eta^{\prime}=\eta_{i}}\One_{x=x_{i}}\mu\left(\eta^{\prime}\right)\,c_{x}\left(\eta^{\prime}\right)\,\left(\grad_{x}T_{A}^{E}\left(\eta^{\prime}\right)\right)^{2}\\
 & \le N^{2}\binom{V}{D}2^{D}q^{-D}\sum_{\eta^{\prime}}\sum_{x}\mu\left(\eta^{\prime}\right)\,c_{x}\left(\eta^{\prime}\right)\,\left(\grad_{x}T_{A}^{E}\left(\eta^{\prime}\right)\right)^{2}\\
 & \le N^{2}\binom{V}{D}2^{D}q^{-D-1}\mathcal{D}T_{A}^{E}=N^{2}\binom{V}{D}2^{D}q^{-D-1}\mu\left(T_{A}^{E}\One_{E}\right),
\end{align*}
where the last equality is due to \corref{dirichletequalsexpectation_EbeforeA}.
\end{proof}
\begin{prop}\label{prop:yoyoma}
Under the hypotheses of \lemref{path_to_time}
\[
\pp_{\mu}\left(\tau_{A}>N^{2}\binom{V}{D}2^{D}q^{-D-2}\right)\le2q+\delta+\sqrt{\delta},
\]
where $\delta=1-\mu\left(E\right)$.
\end{prop}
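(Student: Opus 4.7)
The plan is to combine the energy-type bound $\mu(T_A^E \One_E) \le M$ with $M := N^2 \binom{V}{D} 2^D q^{-D-1}$ (which comes directly from \lemref{path_to_time}) with the stationarity of $\mu$ in order to turn an $L^1$ control on the time spent in $E$ before $A$ into a tail bound on the hitting time $\tau_A$. Set $T = Mq^{-1} = N^2 \binom{V}{D} 2^D q^{-D-2}$. Because we want a bound of the form $2q + \delta + \sqrt{\delta}$, I expect the analysis to split into a trivial regime ($\sqrt{\delta} \ge 1/2$, where the RHS already exceeds $1/2$) and the effective regime ($\sqrt{\delta} < 1/2$, where the factor $1/(1-\sqrt{\delta})$ is at most $2$).

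The first step is to rewrite $\int_E T_A^E\, d\mu$ in terms of the stationary process:
\[
\mu(T_A^E \One_E) \;=\; \ee_\mu\!\left[\One_E(\eta(0))\int_0^{\tau_A} \One_E(\eta(s))\,ds\right].
\]
Next, set $T_T^E = \int_0^T \One_E(\eta(s))\, ds$. Stationarity gives $\ee_\mu[T_T^E] = T(1-\delta)$, and Markov applied to the nonnegative variable $T - T_T^E$ yields
\[
\pp_\mu\!\left(T_T^E < T(1-\sqrt{\delta})\right) \;\le\; \sqrt{\delta}.
\]
This is the quantitative form of the ergodic intuition that the process spends most of its time in $E$.

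On the event $\{\tau_A > T\} \cap \{\eta(0)\in E\} \cap \{T_T^E \ge T(1-\sqrt{\delta})\}$, the double integral in the displayed identity is at least $T(1-\sqrt{\delta})$. Therefore
\[
M \;\ge\; T(1-\sqrt{\delta})\,\pp_\mu\!\left(\tau_A > T,\; \eta(0)\in E,\; T_T^E \ge T(1-\sqrt{\delta})\right),
\]
and a union bound removes the last two conditions at the cost of $\delta + \sqrt{\delta}$:
\[
M \;\ge\; T(1-\sqrt{\delta})\bigl[\pp_\mu(\tau_A > T) - \delta - \sqrt{\delta}\bigr].
\]
Rearranging and using $M/T = q$ gives $\pp_\mu(\tau_A > T) \le \frac{q}{1-\sqrt{\delta}} + \delta + \sqrt{\delta}$. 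Finally, the dichotomy on $\sqrt{\delta}$ mentioned above converts $\frac{q}{1-\sqrt{\delta}}$ into $2q$, completing the bound.

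The only nontrivial step I foresee is the first one, identifying that one should integrate against $\One_E(\eta(0))$ inside the $\ee_\mu$ expectation so that the stationary ergodic estimate on $T_T^E$ can be used to extract $\pp_\mu(\tau_A > T)$ cleanly. Everything else is Markov's inequality and bookkeeping; no strong Markov property or reversibility beyond what is already encoded in the stationarity of $\mu$ is needed.
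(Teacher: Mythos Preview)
Your argument is correct and is essentially the paper's proof repackaged: both use \lemref{path_to_time} together with stationarity and Markov's inequality applied to $T - T_T^E$, and both exploit the inclusion $\{\tau_A > T\}\subseteq\{T_A^E \ge T_T^E\}$; you bound $\ee_\mu[\One_E(\eta(0))T_A^E]$ from below on a good event and rearrange, while the paper equivalently splits $\{\tau_A>T\}$ through the threshold $t=(1-\sqrt\delta)T$ and applies Markov to each piece. One small caveat: your ``trivial regime'' claim that the right-hand side exceeds $1/2$ when $\sqrt\delta\ge 1/2$ does not by itself make the inequality trivial (a probability can exceed $1/2$); the paper glosses over exactly the same point when it writes $\pp_\mu(T_A^E\One_E\ge t)\le 2q$, so your treatment is no less rigorous than the original and the issue is irrelevant in the intended small-$\delta$ regime.
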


\begin{proof}
Let $t=\left(1-\sqrt{\delta}\right)N^{2}\binom{V}{D}2^{D}q^{-D-2}$.
By \lemref{path_to_time} and the Markov inequality 
\[
\pp_{\mu}\left(T_{A}^{E}\One_{E}\left(\eta\left(0\right)\right)\ge t\right)\le2q.
\]
Since $\mu\left(E\right)=1-\delta$,
\begin{align*}
\pp_{\mu}\left(T_{A}^{E}\ge t\right) & =\pp_{\mu}\left(T_{A}^{E}\ge t\text{ and }\eta\left(0\right)\in E\right)+\pp_{\mu}\left(T_{A}^{E}\ge t\text{ and }\eta\left(0\right)\notin E\right)\\
 & \le2q+\delta.
\end{align*}
On the other hand, for all $s>0$
\[
\ee_{\mu}\left(T_{s}^{E}\right)=s\mu\left(E\right)=\left(1-\delta\right)s,
\]
and since $T_{s}^{E}\le s$ we can again apply Markov's inequality
(for the positive variable $s-T_{s}^{E}$), obtaining
\[
\pp_{\mu}\left(T_{s}^{E}\le\left(1-\sqrt{\delta}\right)s\right)\le\sqrt{\delta}.
\]
In particular, chosing $s=N^{2}\binom{V}{D}2^{D}q^{-D-2}$ yields
\begin{align*}
\pp_{\mu}\left(\tau_{A}\ge s\right) & \le\pp_{\mu}\left(T_{\tau_{A}}^{E}\ge T_{s}^{E}\right)\le\pp_{\mu}\left(T_{\tau_{A}}^{E}\ge t\right)+\pp_{\mu}\left(T_{s}^{E}\le t\right)\\
 & \le2q+\delta+\sqrt{\delta}.
\end{align*}
\end{proof}
The lower bound can be obtained by comparison to the associated bootstrap
percolation (see also \cite{CMRT_kcmzoo}).
\begin{defn}
$\tau_{0}^{\text{BP}}$ is the $\mu$-random variable describing the 
infection time of the origin for bootstrap percolation.
\end{defn}

\begin{lem}
\label{lem:upperbound_bp}Fix $t>0$, $\delta>0$, and assume $\mu\left(\tau_{0}^{\text{BP}}<100t\right)<\delta$.
Then $\pp_{\mu}\left(\tau_{0}<t\right)<\delta+e^{-t}$.
\end{lem}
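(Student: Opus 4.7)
The plan is to exploit the decomposition
\[
\pp_{\mu}(\tau_{0} < t) \le \mu(\tau_{0}^{\text{BP}} < 100t) + \pp_{\mu}(\tau_{0} < t,\ \tau_{0}^{\text{BP}} \ge 100t),
\]
in which the first term is bounded by $\delta$ directly from the hypothesis, so the real work is to show that the second term does not exceed $e^{-t}$. Intuitively, if the associated bootstrap percolation takes at least $100t$ discrete steps to reach the origin from $\eta$, then the KCM, whose every legal update is driven by an independent rate-one Poisson clock, has too little time to execute enough infection events during $[0,t]$.

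To make this rigorous I would first introduce an auxiliary monotone continuous-time bootstrap percolation, driven by the same Poisson clocks as the KCM but with no coin flip: at each ring at a site $x$ whose two-neighbor constraint is satisfied, the auxiliary process infects $x$, and no site ever heals. A straightforward induction over clock rings --- using that any new KCM infection requires two currently-infected neighbors which, by the inductive hypothesis, already belong to the growing auxiliary infected set --- shows that the set of sites ever KCM-infected is contained pathwise in the auxiliary infected set. In particular $\tau_{0}^{\text{KCM}} \ge \tau_{0}^{\text{aux}}$ on every realization.

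Next, I would bound $\tau_{0}^{\text{aux}}$ from below by a sum of independent Exp(1) waiting times along a BP-causal chain to the origin. By the bootstrap percolation rule a site of BP-rank $k$ must have a neighbor of rank exactly $k-1$ (otherwise its constraint would have been satisfied one step earlier, contradicting the definition of rank), and iterating yields a rank-decreasing chain $0 = x_{0}, x_{1}, \ldots, x_{N}$ in $\susc$, where $N = \tau_{0}^{\text{BP}}(\eta)$ and $x_{N}$ is initially infected. In the monotone auxiliary process, the infection of each successive $x_{i}$ can only occur after a Poisson ring at $x_i$ once its infecting pair has been infected, and the memoryless property delivers an independent Exp(1) waiting time at each step. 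This shows that $\tau_{0}^{\text{aux}}$ stochastically dominates $\Gamma_{N} := \sum_{i=1}^{N}\xi_{i}$ with $\xi_{i}$ i.i.d.\ Exp(1).

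Combining the two steps on the event $\{\tau_{0}^{\text{BP}} \ge 100t\}$ gives $\pp_{\eta}(\tau_{0} < t) \le \pp(\Gamma_{100t} < t) = \pp(\mathrm{Poisson}(t) \ge 100t)$, and a standard Chernoff estimate for the Poisson tail bounds this by $e^{-t}$ (in fact by much less, the factor $100$ providing ample slack). I expect the main technical obstacle to be the second step: rigorously extracting $N$ genuinely independent Exp(1) waiting times along the rank-decreasing chain, because the monotone auxiliary process may use infecting pairs of neighbors \emph{outside} the chain to speed up propagation. Handling this requires a filtration argument invoking the memoryless property together with the observation that, however the auxiliary process chooses its infecting pair at each site, one of the infecting neighbors must itself have completed a propagation chain of comparable depth, so the sequential Exp(1) delays cannot all be avoided.
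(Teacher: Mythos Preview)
Your decomposition and the monotone coupling with the auxiliary continuous-time bootstrap process are both correct, and this route is genuinely different from the paper's. The paper does not introduce any auxiliary process; it invokes finite speed of propagation for the interacting particle system as a black box: it couples the dynamics from $\eta$ with the dynamics from $\eta^{X\leftarrow h}$, $X=\{\|x\|_1\ge 100t\}$, so that the two agree at the origin up to time $t$ with probability at least $1-e^{-t}$, and then observes that the second dynamics can never infect the origin because the origin lies outside the bootstrap closure of the truncated configuration.

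However, your Step~5 has a genuine gap, and the fix you sketch does not close it. The stochastic domination $\tau_0^{\text{aux}}\succeq\Gamma_N$ is \emph{false}: the auxiliary process is not forced to use the particular rank-decreasing chain you built, and parallelism makes it strictly faster. For a concrete obstruction, take the origin with BP-rank $2$, one neighbour of rank $0$ and three neighbours of rank $1$ (each with two rank-$0$ neighbours). Then the origin's constraint is satisfied at the minimum of three independent $\mathrm{Exp}(1)$ variables, i.e.\ at an $\mathrm{Exp}(3)$ time, and $\tau_0^{\text{aux}}$ has the law of $\mathrm{Exp}(3)+\mathrm{Exp}(1)$, which is stochastically \emph{smaller} than $\mathrm{Exp}(1)+\mathrm{Exp}(1)=\Gamma_2$. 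Your ``filtration argument'' cannot repair this: even granting that the causal DAG of the auxiliary process has depth at least $N$ (which is true and follows from your rank observation), the waiting times along the realized deepest branch are \emph{not} independent $\mathrm{Exp}(1)$'s, because the branch is selected by the process and is biased towards small increments.

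What does work is a union bound over \emph{all} potential causal chains. If the origin is aux-infected before time $t$, the depth-$\ge N$ causal DAG yields a self-avoiding lattice path $0=z_0,z_1,\dots,z_M$ with $M\ge N$ and decreasing ring times $t>s_0>s_1>\dots>s_{M-1}>0$, $s_i$ a ring of $z_i$. For a fixed self-avoiding path of length $M$ the expected number of such ordered ring sequences is $t^M/M!$, and summing over the at most $(2d)^M$ paths gives
\[
\pp_\eta(\tau_0^{\text{aux}}<t)\ \le\ \sum_{M\ge N}\frac{(2dt)^M}{M!}\ =\ \pp\bigl(\mathrm{Poisson}(2dt)\ge N\bigr)\ \le\ e^{-t}
\]
for $N\ge 100t$. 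This is precisely the computation underlying the finite-speed-of-propagation estimate that the paper quotes; so once the gap is properly filled, your argument and the paper's coincide.
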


\begin{proof}
By the finite speed of propagation property (see, e.g., \cite[section 3.3]{Martinelli_SaintFlourNotes}), setting $X = {x : \norm{x}_1 \ge 100t}$, we may couple the dynamics starting at a configuration $\eta$ with the dynamics starting at the configuration $\eta^{X\leftarrow h}$, such that with probability at least $1-e^{-t}$ the state of the origin in both dynamics is equal up to time $t$. By the definition of the bootstrap percolation, if $\tau_0^{\text{BP}}\ge 100t$, than the dynamics starting from the state $\eta^{X\leftarrow h}$ could never infect the origin. Therefore, the dynamics starting at $\eta$ could infect the origin with probability at most $e^{-t}$. This concludes the proof.
\end{proof}

\section{KCM on polluted $\protect\zz^{2}$}

The upper bound for $\tau_0$ is given by \lemref{upperbound_bp} and the estimates
of $\tau_{0}^{\text{BP}}$ in \cite{AL} for the non-polluted case (i.e., when
all sites are susceptible), together with the observation that by
adding immune sites $\tau_{0}^{\text{BP}}$ could only increase.

For the lower bound, we start by fixing two scales: 
\begin{align}
L & =q^{-1-\varepsilon/3},\label{eq:fa2polluted_Landl}\\
l & =q^{-L-1}.\nonumber 
\end{align}
\begin{defn}
A square (that is, a subset of $\zz^{2}$ of the form $x+\left[L\right]^{2}$)
is \textit{good} if all its sites are susceptible and each row and column contain at least one infected site.
\end{defn}

\begin{claim}
\label{claim:fa2polluted_goodprob}For $q$ small enough $\nu\otimes\mu\,\left(\left[L\right]^{2}\text{ is good}\right)\ge1-2q^{\varepsilon/3}$.
\end{claim}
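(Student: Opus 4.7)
The plan is to decompose the ``bad'' event into three pieces and bound each by a union bound, exploiting the huge gap between $L = q^{-1-\varepsilon/3}$ and the relevant probabilities. Let $A_1$ be the event that $[L]^2$ contains an immune site, $A_2$ the event that some row of $[L]^2$ contains no infected site, and $A_3$ the event that some column of $[L]^2$ contains no infected site. Then $\{[L]^2 \text{ is not good}\} \subseteq A_1 \cup A_2 \cup A_3$, so it suffices to show that $\nu\otimes\mu(A_1) + \nu\otimes\mu(A_2) + \nu\otimes\mu(A_3) \leq 2q^{\varepsilon/3}$.

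First I would handle $A_1$: since the environment has $L^2$ sites and each is immune independently with probability $\pi < q^{2+\varepsilon}$, the union bound gives $\nu(A_1) \leq L^2 \pi < q^{-2-2\varepsilon/3} \cdot q^{2+\varepsilon} = q^{\varepsilon/3}$. This is the dominant contribution to the bound.

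Next I would bound $A_2$ (and $A_3$ by symmetry). Conditionally on the environment, the $\mu$-probability that a fixed row of $[L]^2$ contains no infected site is at most $(1-q)^L \leq e^{-qL} = e^{-q^{-\varepsilon/3}}$, and there are $L$ rows, so $\mu(A_2) \leq L e^{-q^{-\varepsilon/3}} = q^{-1-\varepsilon/3} e^{-q^{-\varepsilon/3}}$. Since $q^{-\varepsilon/3} \to \infty$ as $q \to 0$, this decays faster than any polynomial in $q$, and in particular is $o(q^{\varepsilon/3})$; for $q$ small enough it is bounded by $\tfrac{1}{2}q^{\varepsilon/3}$. The same estimate holds for $A_3$.

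Combining the three contributions yields $\nu\otimes\mu(A_1 \cup A_2 \cup A_3) \leq q^{\varepsilon/3} + 2 \cdot \tfrac{1}{2} q^{\varepsilon/3} = 2 q^{\varepsilon/3}$, as desired. There is no real obstacle: the estimate is essentially a union bound, and the only point to be a little careful about is choosing $q$ small enough that the super-polynomial decay of $L e^{-q^{-\varepsilon/3}}$ indeed beats $q^{\varepsilon/3}$, which is automatic once the relevant scales are written out. The computation also makes transparent why the definition of $L$ was chosen: $qL = q^{-\varepsilon/3}$ is the largest scale for which we still have sub-polynomial probability of an empty row, while $L^2 \pi$ remains polynomially small in $q$.
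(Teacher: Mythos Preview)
Your proof is correct and follows essentially the same approach as the paper: bound the probability of an immune site by $L^2\pi \le q^{\varepsilon/3}$, bound the probability of an all-healthy row or column by $2L(1-q)^L \sim 2q^{-1-\varepsilon/3}e^{-q^{-\varepsilon/3}} = o(q^{\varepsilon/3})$, and combine by a union bound. The only cosmetic difference is that the paper treats rows and columns together in a single term $2L(1-q)^L$, while you split them into $A_2$ and $A_3$; one small wording slip is that ``conditionally on the environment'' the bound $(1-q)^L$ need not hold (a row with immune sites has fewer susceptible sites), but this is harmless since you may either work on $A_1^c$ or compute directly under $\nu\otimes\mu$ to get $(1-(1-\pi)q)^L$, which decays just as fast.
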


\begin{proof}
The probability that one of the sites of $\left[L\right]^{2}$ is
immune is at most $L^{2}\pi$, which is bounded by $q^{-2-2\varepsilon/3}\,q^{2+\varepsilon}=q^{\varepsilon/3}$.
The probability that one of the line or columns of $\left[L\right]^{2}$
is entirely healthy is at most $2L\left(1-q\right)^{L}$, which is
asymptotically equivalent to $2q^{-1-\varepsilon/3}e^{-q^{-\varepsilon/3}}$.
This bound tends to $0$ much faster than $q^{\varepsilon/3}$, and
the union bound given the proof of the claim.
\end{proof}
We will consider the coarse grained lattice, i.e., the boxes of the form $L\hat{x} + [L]^2$ for $\hat{x}\in \zz^2$. The boxes of this lattice do not overlap, thus they are good or not good independently. That is, the notion of a good box defines a Bernoulli percolation process on the coarse grained lattice.
Together with results from percolation theory (e.g. \cite[Theorem 1.33]{DCT_PercolationShaprness,Grimmett})
this implies the following corollary.
\begin{cor}
\label{cor:fa2polluted_infinitegood}The $\nu\otimes\mu$-probability
that the origin belongs to an infinite cluster of good boxes is at
least $1-16q^{\varepsilon/3}$.
\end{cor}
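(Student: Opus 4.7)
The plan is to view ``goodness of boxes'' as a high-density Bernoulli site percolation process on the coarse grained $\zz^{2}$ and to conclude via a Peierls contour estimate.

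First I would observe that, because the boxes $L\hat x+[L]^{2}$ for distinct $\hat x\in\zz^{2}$ are disjoint and both $\nu$ and $\mu$ are product measures over sites, the events $\{L\hat x+[L]^{2}\text{ is good}\}$ are mutually independent under $\nu\otimes\mu$. By \claimref{fa2polluted_goodprob}, each such event has probability at least $1-2q^{\varepsilon/3}$, so we are in Bernoulli site percolation on $\zz^{2}$ with density $p\ge 1-2q^{\varepsilon/3}$, which tends to $1$ as $q\to0$.

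Next, I would apply a classical planar Peierls argument. If the origin does not belong to an infinite cluster of good boxes, then either the box containing it is itself bad, or it is good but lies in a finite connected component; by planar duality the latter case forces the existence of a $*$-connected circuit of bad boxes around the origin. A standard count bounds the number of self-avoiding $*$-circuits of length $n$ around the origin by $8n\cdot 7^{n-1}$, since such a circuit must cross the positive horizontal axis at some $(k,0)$ with $1\le k\le n$ and a $*$-self-avoiding walk of length $n$ from a fixed vertex has at most $8\cdot 7^{n-1}$ possible shapes. A union bound combined with independence then yields
\[
	\nu\otimes\mu\bigl(\text{origin not in infinite good cluster}\bigr)\;\le\;2q^{\varepsilon/3}+\sum_{n\ge 4}8n\cdot 7^{n-1}\bigl(2q^{\varepsilon/3}\bigr)^{n}.
\]

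For $q$ small enough the series is of order $q^{4\varepsilon/3}$ and hence negligible compared to $q^{\varepsilon/3}$, so the right-hand side is $2q^{\varepsilon/3}(1+o(1))$, which is easily dominated by $16q^{\varepsilon/3}$. There is no real obstacle: this is a textbook application of the sharpness/Peierls bound for supercritical site percolation, packaged cleanly by the cited \cite{DCT_PercolationShaprness,Grimmett}. The only mild point to verify is the independence of the box events, which is immediate from the disjointness of the boxes together with the product structure of $\nu$ and $\mu$.
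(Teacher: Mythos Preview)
Your proposal is correct and follows the same approach as the paper: observe that the disjointness of the boxes and the product structure of $\nu\otimes\mu$ make goodness an \iid\ Bernoulli field on the coarse-grained lattice with density $\ge 1-2q^{\varepsilon/3}$, and then invoke the high-density percolation estimate. The only difference is cosmetic: the paper simply cites the relevant percolation result \cite[Theorem 1.33]{DCT_PercolationShaprness,Grimmett} as a black box, whereas you spell out the Peierls contour bound explicitly; both yield a bound of the form $C(1-p)$ on the probability of not belonging to the infinite cluster, which is comfortably below $16q^{\varepsilon/3}$ for small $q$.
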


\begin{defn}
Consider a path of good boxes on the course-grained lattice. We say that the path is \textit{super-good}
if one of its boxes contains an infected line.
\end{defn}

\begin{claim}
\label{claim:fa2polluted_supergoodifgood}Fix a self avoiding path
of boxes whose length is $l$. Then
\[
\nu\otimes\mu\left(\text{path is super-good }|\text{ path is good}\right)\ge1-e^{-1/q}.
\]
\end{claim}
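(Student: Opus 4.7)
The plan is to exploit the fact that the $l$ boxes along the self-avoiding path are pairwise disjoint. This makes the events depending on a single box, such as being good or containing an entirely infected line, independent across boxes, so the conditional probability of being super-good given good factorizes and the whole problem reduces to a single-box estimate.

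For a single box $B$ of side $L$, I write $G_B$ for the event ``$B$ is good'' and $A_B$ for ``$B$ contains an entirely infected row or column'', and the key step is to lower bound $\nu\otimes\mu(A_B\mid G_B)$. The useful observation is that if some row of $B$ is entirely infected, then every column of $B$ automatically contains an infected site, so the column condition in the definition of good is free; one only needs the remaining rows to each contain at least one infected site and the environment inside $B$ to be entirely susceptible. Using independence across rows,
\[
\nu\otimes\mu\bigl(\{\text{row }1\text{ entirely infected}\}\cap G_B\bigr) = (1-\pi)^{L^2}\,q^L\,\bigl(1-(1-q)^L\bigr)^{L-1}.
\]
Since $qL=q^{-\varepsilon/3}\to\infty$, the last factor is $1-o(1)$, and similarly $(1-\pi)^{L^2}=1-o(1)$ under $\pi<q^{2+\varepsilon}$. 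A union bound over the $L$ rows, corrected by the negligible second-order term $\binom{L}{2}q^{2L}$, then gives $\nu\otimes\mu(A_B\cap G_B)\geq \tfrac{1}{2}Lq^L$ for $q$ small, and since $\nu\otimes\mu(G_B)\leq 1$ this yields $\nu\otimes\mu(A_B\mid G_B)\geq \tfrac{1}{2}Lq^L$.

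To conclude, conditional on the whole path being good the events $A_{B_1},\dots,A_{B_l}$ are independent, each of probability at least $\tfrac{1}{2}Lq^L$, so
\[
\nu\otimes\mu(\text{path not super-good}\mid \text{path good}) \leq \bigl(1-\tfrac{1}{2}Lq^L\bigr)^{l} \leq \exp\!\bigl(-\tfrac{1}{2}lLq^L\bigr) = \exp\!\bigl(-\tfrac{L}{2q}\bigr),
\]
where I used $l=q^{-L-1}$. Since $L=q^{-1-\varepsilon/3}\to\infty$ as $q\to 0$, for $q$ small we have $L/(2q)\geq 1/q$ and the claim follows. The only bit of bookkeeping that requires some care is the single-box inclusion-exclusion estimate, because the event ``good'' still constrains the other rows once a given row is declared entirely infected; everything else is essentially routine.
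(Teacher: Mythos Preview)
Your argument is correct and reaches the same bound, but it is organized differently from the paper's proof. You exploit the product structure directly: since the boxes along a self-avoiding path are pairwise disjoint, the pairs $(G_{B_i},A_{B_i})$ are independent across $i$, so conditioning on $\bigcap_i G_{B_i}$ factorizes and the problem reduces to a single-box estimate $\nu\otimes\mu(A_B\mid G_B)$, which you handle by an explicit Bonferroni computation. The paper instead observes that both ``path is good'' and ``some box contains an infected line'' are increasing events and applies the FKG inequality to drop the conditioning on good altogether, leaving only the unconditional probability $(1-q^L)^{lL}$ that no row in any box is fully infected. The FKG route is shorter and avoids the single-box inclusion--exclusion bookkeeping you flagged; your route is more elementary in that it uses nothing beyond independence, and it would still go through in settings where positive association fails but the product structure is available. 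Both arguments in fact yield the stronger bound $e^{-cL/q}$, which is then weakened to $e^{-1/q}$ as stated.
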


\begin{proof}
Since the events $\left\{ \text{the path is good}\right\} $ and $\left\{ \text{one of the boxes contains an infected line}\right\} $
are both increasing we can use the FKG inequality \cite{Grimmett}, and bound this
probability by the probability that a length $l$ path of boxes (not
necessarily good) does not contain an infected line. This conclude
the proof, since
\[
\left(1-q^{L}\right)^{lL}\le e^{-q^{L}\,q^{-L-1}}.
\]
\end{proof}
\begin{claim}
\label{claim:fa2polluted_probofsg}For $q$ small enough $\nu\otimes\mu\left(0\text{ belongs to a super-good path of length }l\right)\ge1-25q^{\varepsilon/3}$.
\end{claim}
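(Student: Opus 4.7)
The plan is to combine the percolation estimate of \corref{fa2polluted_infinitegood} with an infected-line argument of the same flavour as the proof of \claimref{fa2polluted_supergoodifgood}. By \corref{fa2polluted_infinitegood}, with $\nu\otimes\mu$-probability at least $1-16q^{\varepsilon/3}$ the origin lies in an infinite cluster of good boxes. On this event I will select, using any rule that depends only on the coarse-grained good/not-good configuration (for instance, the lexicographically smallest such path), a self-avoiding path $\Gamma$ of good boxes of length $l$ starting at the box containing the origin.

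I then condition on the full coarse-grained good configuration, which freezes $\Gamma$. Because $\mu$ is a product measure and each box being good is a function of its own sites only, conditional on the good configuration the events ``box $B$ contains a fully infected row or column'', for $B\in\Gamma$, are mutually independent. For a single good box $B$, the FKG step used in the proof of \claimref{fa2polluted_supergoodifgood} yields
\[
\nu\otimes\mu\bigl(B \text{ contains an infected line} \mid B \text{ is good}\bigr) \ge \nu\otimes\mu\bigl(B \text{ contains an infected line}\bigr) \ge q^{L},
\]
since both events are increasing in the infection configuration and the probability that a fixed row is entirely infected is already $q^L$. Consequently, on the event that the origin lies in the infinite good cluster,
\[
\nu\otimes\mu\bigl(\Gamma \text{ is not super-good} \mid \text{good configuration}\bigr) \le (1-q^L)^l \le e^{-q^{L}l} = e^{-1/q}.
\]

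Integrating over the good configuration and adding the probability of the complement of the infinite-cluster event yields
\[
\nu\otimes\mu\bigl(0 \text{ is not in a super-good path of length } l\bigr) \le 16 q^{\varepsilon/3} + e^{-1/q},
\]
which is at most $25 q^{\varepsilon/3}$ for $q$ small enough. The one point I expect to need care with is checking that the rule producing $\Gamma$ is measurable with respect to the coarse-grained good configuration alone; once this is in place, the independence of infections across the distinct boxes of $\Gamma$ follows from the product structure of $\mu$, and the rest reduces to plugging in the scales $L=q^{-1-\varepsilon/3}$ and $l=q^{-L-1}$ so that $q^{L}l=q^{-1}$.
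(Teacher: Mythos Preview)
Your proposal is correct and follows essentially the same route as the paper: invoke \corref{fa2polluted_infinitegood} to obtain a self-avoiding good path of length $l$ through the origin, and then combine FKG with the product structure to show this path is super-good except with probability at most $e^{-1/q}$, which added to $16q^{\varepsilon/3}$ gives the bound. Your version is in fact a bit more careful than the paper's, since you make explicit the selection of $\Gamma$ as a function of the coarse-grained good configuration and the conditional independence of the infected-line events across boxes; the paper simply cites \claimref{fa2polluted_supergoodifgood} (stated for a fixed path) and a union bound without spelling out this step.
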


\begin{proof}
By \corref{fa2polluted_infinitegood} the origin belongs to an infinite
cluster of good boxes with probability greater than $1-16q^{\varepsilon/3}$.
In particular, it is contained in a self-avoiding path of length $l$.
Then we use \claimref{fa2polluted_supergoodifgood} and the union
bound to conclude the proof.
\end{proof}
\begin{defn}
$p_{\text{SG}}\left(\omega\right)$ is the $\mu$-probability that
the origin is contained in a super-good path of length $l$.
\end{defn}

\begin{defn}
We say that $\omega$ is \textit{low pollution} if $p_{\text{SG}}\left(\omega\right)>1-5q^{\varepsilon/6}$.
\end{defn}

\begin{claim}
\label{claim:fa2polluted_proboflowpollution}$\nu\left(\text{low pollution}\right)\ge1-5q^{\varepsilon/6}$.
\end{claim}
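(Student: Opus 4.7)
The plan is to deduce the claim from the previous Claim (that $\nu\otimes\mu\left(0\text{ is in a super-good path of length }l\right)\ge1-25q^{\varepsilon/3}$) by a one-line application of Markov's inequality to the quenched failure probability $1-p_{\text{SG}}(\omega)$.

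First I would observe that, by the definition of $p_{\text{SG}}$ and Fubini,
\[
\nu\otimes\mu\left(0\text{ is in a super-good path of length }l\right)=\int p_{\text{SG}}(\omega)\,d\nu(\omega),
\]
so that the bound from the previous claim reads $\int\left(1-p_{\text{SG}}(\omega)\right)d\nu(\omega)\le 25q^{\varepsilon/3}$.

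Since $1-p_{\text{SG}}(\omega)$ is a nonnegative random variable under $\nu$, Markov's inequality with threshold $5q^{\varepsilon/6}$ gives
\[
\nu\left(1-p_{\text{SG}}(\omega)\ge 5q^{\varepsilon/6}\right)\le\frac{25q^{\varepsilon/3}}{5q^{\varepsilon/6}}=5q^{\varepsilon/6},
\]
and the complementary event $\left\{1-p_{\text{SG}}(\omega)<5q^{\varepsilon/6}\right\}$ is exactly the event that $\omega$ is low pollution. Taking complements yields the desired bound $\nu(\text{low pollution})\ge 1-5q^{\varepsilon/6}$.

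There is no real obstacle here; this is a routine quenched-versus-annealed conversion. The only choice worth highlighting is that the exponent $\varepsilon/6$ in the definition of low pollution is tuned precisely so that the Markov threshold and the Markov output agree (both equal $5q^{\varepsilon/6}$), using the $25q^{\varepsilon/3}$ annealed bound from the previous claim. The constants $5$ and $25=5^{2}$ are matched for the same reason.
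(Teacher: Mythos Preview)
Your proof is correct and matches the paper's approach exactly: the paper also invokes the previous claim to obtain $\nu(p_{\text{SG}})\ge 1-25q^{\varepsilon/3}$ and then applies Markov's inequality (to $1-p_{\text{SG}}$, using $p_{\text{SG}}\le 1$). You have simply written out the Fubini step and the Markov computation in more detail than the paper does.
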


\begin{proof}
By \claimref{fa2polluted_probofsg}, $\nu\left(p_{\text{SG}}\right)\ge1-25q^{\varepsilon/3}$.
Since $p_{\text{SG}}\le1$, Markov inequality will give the result.
\end{proof}
From now on we think of a fixed $\omega$. Let
\begin{align*}
E & =\left\{ 0\text{ belongs to a super-good path of length }l\right\} ,\\
A & =\left\{ \eta_{0}=0\right\} .
\end{align*}
\begin{prop} \label{prop:z2path}
$E,A$ satisfy the assumptions of \lemref{path_to_time} with $N=4L^{2}l,D=3L,V=3L^{2}$.
\end{prop}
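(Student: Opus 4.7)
The plan is to construct, for each $\eta \in E$, an explicit legal sequence of FA$2$f single-site flips that propagates the infection from the super-good line all the way to the origin, while keeping the running difference from $\eta$ inside a narrow ``wave'' of three consecutive rows or columns.

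Fix $\eta \in E$ and let $B_0, \dots, B_k$ (with $k \le l$) be a super-good path of good boxes, where $B_0$ contains an entirely infected row or column (the \emph{front}) and $B_k$ contains the origin. The path $\eta_0, \dots, \eta_N$ is built in three phases: first inflate the front until it reaches the face of $B_0$ pointing toward $B_1$; next propagate the front across the path of good boxes one box at a time; finally use the front inside $B_k$ to infect the origin.

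The key building block is what I call a \textbf{single-row advance}. Suppose row $y$ of a good box $B$ is fully infected. Since $B$ is good, row $y+1$ contains a seed $(x_0,y+1)$ already infected in $\eta$. The pair $(x_0,y+1)$ and $(x_0,y)$ satisfies the 2-neighbor constraint at $(x_0 \pm 1, y+1)$, so the infection cascades along row $y+1$ in at most $L-1$ legal flips. Iterating moves a fully infected row across the whole box. To keep the difference set bounded, I retract from the rear at the same pace: once rows $y$, $y+1$, $y+2$ are all fully infected, row $y$ is healed left to right, each site $(x,y)$ using $(x+1,y)$ (still in the line) and $(x,y+1)$ (in the row above). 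Healing stops just short of the last cell, leaving an anchor in column $L$; the anchor of the previous row then cooperates with row $y+1$ one step later to close out the final cell. Each flip is legal and the running difference from $\eta$ is supported in the three rows $\{y-1,y,y+1\}$, so $|X|\le 3L$ throughout, and a single box consumes at most $3L^2$ flips.

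Box-to-box transitions use the same cascade: once the row of $B_i$ facing $B_{i+1}$ is fully infected, the seed in the adjacent row of $B_{i+1}$ (guaranteed by goodness) lets the infection jump the interface in $L-1$ flips, and the single-box routine resumes inside $B_{i+1}$. When the path turns $90^\circ$, the same mechanism applies after exchanging rows with columns, with the shared corner cell as pivot. Concatenating over $k+1\le l$ boxes gives at most $4L^2\cdot l = N$ flips total. I take $Y(x_i)$ to be the union of the (at most two) coarse-grained boxes that contain $x_i$ or lie immediately across the current interface in the propagation direction; this depends only on the location of $x_i$ and has size at most $2L^2\le 3L^2 = V$. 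Combined with $|X|\le 3L=D$, this verifies all five hypotheses of \lemref{path_to_time}. The main obstacle is the rigorous verification of the healing phase: one must check that, regardless of where the original front sits in $B_0$ and how the path turns, the heal order never demands a third infected neighbour inside the window. This requires a short case analysis (straight versus cornered segments, interior versus face starting front), but in each case the anchor trick above---possibly supplemented at corners by a transitional front in the perpendicular direction---suffices, and the constants built into $N=4L^2l$, $D=3L$, $V=3L^2$ are generous enough to absorb it.
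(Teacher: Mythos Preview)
Your overall strategy---propagate an infected line through the super-good path, infecting ahead using the seed guaranteed by goodness and restoring behind---is exactly the paper's approach (which it presents only via Figures~1--3). However, your healing mechanism has a genuine gap. Healing row $y$ left to right using the neighbours $(x+1,y)$ and $(x,y+1)$ works for $x\le L-2$, but the final cell $(L-1,y)$ then has only one guaranteed infected neighbour, namely $(L-1,y+1)$. Your proposed fix, that ``the anchor of the previous row cooperates with row $y+1$ to close out the final cell'', does not work: once $(L-2,y)$ and $(L-1,y-1)$ have been restored, the anchor $(L-1,y)$ has at most the single infected neighbour $(L-1,y+1)$ no matter how you schedule later rows, so these anchors accumulate and the difference set is not bounded by $3L$. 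The correct resolution, implicit in the paper's pictures, is to use the \emph{seed} of the row as the anchor. Since the box is good, row $y$ contains a site $s_y$ with $\eta(s_y)=i$; restoring row $y$ to its $\eta$-state means leaving $s_y$ infected, and one heals from both endpoints inward toward $s_y$, each step using the not-yet-restored neighbour in row $y$ together with the neighbour in row $y+1$. The two sweeps terminate at $s_y\pm 1$, whose second infected neighbour is $s_y$ itself, so nothing is left over and the running difference stays inside two (three during rotation) lines.

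A smaller issue: your $Y(x_i)$ is described as depending on ``the current interface in the propagation direction'', which is a function of the path and hence of $\eta$, not of $x_i$ alone as condition~(5) of \lemref{path_to_time} requires. A valid choice is to take $Y(x)$ to be the coarse-grained box containing $x$ together with the five rows and five columns through $x$ extended one box-length into the neighbouring boxes; this depends only on $x$, has size $L^2+O(L)\le 3L^2$, and contains the difference set both during straight propagation (a $3\times L$ strip near $x$) and during rotation (an L-shape inside the box of $x$).
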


\begin{proof}
The path is constructed by propagating an infected column (or row), as illustrated in Figures \ref{fig:propagationgacol}, \ref{fig:rotatingcol}, and \ref{fig:z2path}.
Figure \ref{fig:propagationgacol} shows how an infected column could propagate to the right in a good box. Since the path may have corners, we will occasionaly need to rotate the infected column and create an infected row, as explained in Figure \ref{fig:rotatingcol}. Finally, using these two basic moves, we may take the column or row that was initially infected by the assumption that the path is super-good, and then move it along the path until the origin is infected. This is illustrated in Figure \ref{fig:z2path}.
\end{proof}

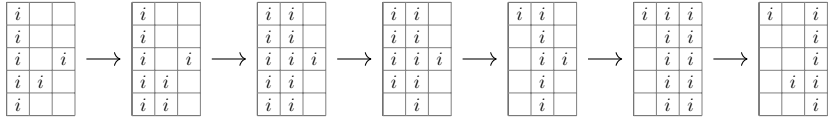
\begin{figure}
\begin{tikzpicture}[scale=0.3, every node/.style={scale=0.6}]
	\def\xx{0}
	\def\yy{0}
	
	\draw[step=5, black, very thin,xshift=\xx cm, yshift=\yy cm] (0,0) grid +(3,5);
	\draw[step=1, gray, very thin,xshift=\xx cm, yshift=\yy cm] (0,0) grid +(3,5);

	\foreach \y in {0,...,4}{
		\draw (\xx+0.5,\yy+\y+0.5) node[black] {$i$};
	}
	
	\draw (\xx+1.5,\yy+1.5) node[black] {$i$};
	\draw (\xx+2.5,\yy+2.5) node[black] {$i$};

	\draw[->]  (3.5,2.5) to (5,2.5);

	\def\xx{5.5}
	\def\yy{0}
	
	\draw[step=5, black, very thin,xshift=\xx cm, yshift=\yy cm] (0,0) grid +(3,5);
	\draw[step=1, gray, very thin,xshift=\xx cm, yshift=\yy cm] (0,0) grid +(3,5);

	\foreach \y in {0,...,4}{
		\draw (\xx+0.5,\yy+\y+0.5) node[black] {$i$};
	}

	\draw (\xx+1.5,\yy+1.5) node[black] {$i$};
	\draw (\xx+1.5,\yy+0.5) node[black] {$i$};
	\draw (\xx+2.5,\yy+2.5) node[black] {$i$};

	\draw[->]  (9,2.5) to (10.5,2.5);

	\def\xx{11}
	\def\yy{0}
	
	\draw[step=5, black, very thin,xshift=\xx cm, yshift=\yy cm] (0,0) grid +(3,5);
	\draw[step=1, gray, very thin,xshift=\xx cm, yshift=\yy cm] (0,0) grid +(3,5);

	\foreach \y in {0,...,4}{
		\draw (\xx+0.5,\yy+\y+0.5) node[black] {$i$};
	}
	\foreach \y in {0,...,4}{
		\draw (\xx+1.5,\yy+\y+0.5) node[black] {$i$};
	}

	\draw (\xx+2.5,\yy+2.5) node[black] {$i$};

	\draw[->]  (14.5,2.5) to (16,2.5);

	\def\xx{16.5}
	\def\yy{0}
	
	\draw[step=5, black, very thin,xshift=\xx cm, yshift=\yy cm] (0,0) grid +(3,5);
	\draw[step=1, gray, very thin,xshift=\xx cm, yshift=\yy cm] (0,0) grid +(3,5);

	\foreach \y in {1,...,4}{
		\draw (\xx+0.5,\yy+\y+0.5) node[black] {$i$};
	}
	\foreach \y in {0,...,4}{
		\draw (\xx+1.5,\yy+\y+0.5) node[black] {$i$};
	}
	
	\draw (\xx+2.5,\yy+2.5) node[black] {$i$};

	\draw[->]  (20,2.5) to (21.5,2.5);

	\def\xx{22}
	\def\yy{0}
	
	\draw[step=5, black, very thin,xshift=\xx cm, yshift=\yy cm] (0,0) grid +(3,5);
	\draw[step=1, gray, very thin,xshift=\xx cm, yshift=\yy cm] (0,0) grid +(3,5);

	\draw (\xx+0.5,\yy+4.5) node[black] {$i$};

	\foreach \y in {0,...,4}{
		\draw (\xx+1.5,\yy+\y+0.5) node[black] {$i$};
	}
	
	\draw (\xx+2.5,\yy+2.5) node[black] {$i$};

	\draw[->]  (25.5,2.5) to (27,2.5);

	\def\xx{27.5}
	\def\yy{0}
	
	\draw[step=5, black, very thin,xshift=\xx cm, yshift=\yy cm] (0,0) grid +(3,5);
	\draw[step=1, gray, very thin,xshift=\xx cm, yshift=\yy cm] (0,0) grid +(3,5);

	\draw (\xx+0.5,\yy+4.5) node[black] {$i$};
	
	\foreach \y in {0,...,4}{
		\draw (\xx+1.5,\yy+\y+0.5) node[black] {$i$};
	}	
	
	\foreach \y in {0,...,4}{
		\draw (\xx+2.5,\yy+\y+0.5) node[black] {$i$};
	}

	\draw[->]  (31,2.5) to (32.5,2.5);

	\def\xx{33}
	\def\yy{0}
	
	\draw[step=5, black, very thin,xshift=\xx cm, yshift=\yy cm] (0,0) grid +(3,5);
	\draw[step=1, gray, very thin,xshift=\xx cm, yshift=\yy cm] (0,0) grid +(3,5);

	\draw (\xx+0.5,\yy+4.5) node[black] {$i$};
	
	\foreach \y in {0,...,4}{
		\draw (\xx+2.5,\yy+\y+0.5) node[black] {$i$};
	}
	
	\draw (\xx+1.5,\yy+1.5) node[black] {$i$};
	
\end{tikzpicture}

\caption{\label{fig:propagationgacol}Illustration of the proof of \propref{z2path}.
We see here how an infected column could propagate in a good box. $i$ stands for infected sites. Other sites could be either infected or healthy, according to their initial state.}
\end{figure}
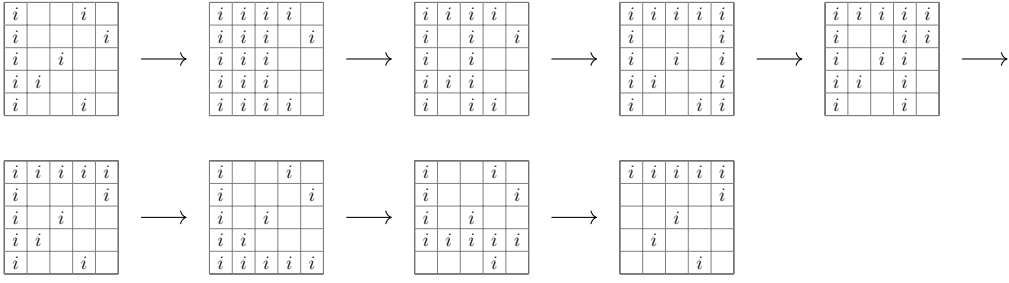
\begin{figure}
\begin{tikzpicture}[scale=0.3, every node/.style={scale=0.6}]
	\def\xx{0}
	\def\yy{0}
	
	\draw[step=5, black, very thin,xshift=\xx cm, yshift=\yy cm] (0,0) grid +(5,5);
	\draw[step=1, gray, very thin,xshift=\xx cm, yshift=\yy cm] (0,0) grid +(5,5);

	\foreach \y in {0,...,4}{
		\draw (\xx+0.5,\yy+\y+0.5) node[black] {$i$};
	}
	
	\draw (\xx+1.5,\yy+1.5) node[black] {$i$};
	\draw (\xx+2.5,\yy+2.5) node[black] {$i$};
	\draw (\xx+3.5,\yy+4.5) node[black] {$i$};
	\draw (\xx+3.5,\yy+0.5) node[black] {$i$};
	\draw (\xx+4.5,\yy+3.5) node[black] {$i$};

	\draw[->]  (6,2.5) to (8,2.5);

	\def\xx{9}
	\def\yy{0}
	
	\draw[step=5, black, very thin,xshift=\xx cm, yshift=\yy cm] (0,0) grid +(5,5);
	\draw[step=1, gray, very thin,xshift=\xx cm, yshift=\yy cm] (0,0) grid +(5,5);

	\foreach \y in {0,...,4}{
		\draw (\xx+0.5,\yy+\y+0.5) node[black] {$i$};
	}
	\foreach \y in {0,...,4}{
		\draw (\xx+1.5,\yy+\y+0.5) node[black] {$i$};
	}
	\foreach \y in {0,...,4}{
		\draw (\xx+2.5,\yy+\y+0.5) node[black] {$i$};
	}

	\draw (\xx+3.5,\yy+4.5) node[black] {$i$};
	\draw (\xx+3.5,\yy+0.5) node[black] {$i$};
	\draw (\xx+4.5,\yy+3.5) node[black] {$i$};

	\draw[->]  (15,2.5) to (17,2.5);

	\def\xx{18}
	\def\yy{0}
	
	\draw[step=5, black, very thin,xshift=\xx cm, yshift=\yy cm] (0,0) grid +(5,5);
	\draw[step=1, gray, very thin,xshift=\xx cm, yshift=\yy cm] (0,0) grid +(5,5);

	\foreach \y in {0,...,4}{
		\draw (\xx+0.5,\yy+\y+0.5) node[black] {$i$};
	}

	\foreach \y in {0,...,4}{
		\draw (\xx+2.5,\yy+\y+0.5) node[black] {$i$};
	}
	
	\draw (\xx+1.5,\yy+1.5) node[black] {$i$};
	\draw (\xx+1.5,\yy+4.5) node[black] {$i$};
	\draw (\xx+3.5,\yy+4.5) node[black] {$i$};
	\draw (\xx+3.5,\yy+0.5) node[black] {$i$};
	\draw (\xx+4.5,\yy+3.5) node[black] {$i$};

	\draw[->]  (24,2.5) to (26,2.5);

	\def\xx{27}
	\def\yy{0}
	
	\draw[step=5, black, very thin,xshift=\xx cm, yshift=\yy cm] (0,0) grid +(5,5);
	\draw[step=1, gray, very thin,xshift=\xx cm, yshift=\yy cm] (0,0) grid +(5,5);

	\foreach \y in {0,...,4}{
		\draw (\xx+0.5,\yy+\y+0.5) node[black] {$i$};
	}

	\foreach \y in {0,...,4}{
		\draw (\xx+4.5,\yy+\y+0.5) node[black] {$i$};
	}

	\draw (\xx+1.5,\yy+4.5) node[black] {$i$};
	\draw (\xx+2.5,\yy+4.5) node[black] {$i$};
	\draw (\xx+3.5,\yy+4.5) node[black] {$i$};
	
	\draw (\xx+1.5,\yy+1.5) node[black] {$i$};
	\draw (\xx+3.5,\yy+0.5) node[black] {$i$};
	\draw (\xx+2.5,\yy+2.5) node[black] {$i$};

	\draw[->]  (33,2.5) to (35,2.5);

	\def\xx{36}
	\def\yy{0}
	
	\draw[step=5, black, very thin,xshift=\xx cm, yshift=\yy cm] (0,0) grid +(5,5);
	\draw[step=1, gray, very thin,xshift=\xx cm, yshift=\yy cm] (0,0) grid +(5,5);

	\foreach \y in {0,...,4}{
		\draw (\xx+0.5,\yy+\y+0.5) node[black] {$i$};
	}

	\foreach \y in {0,...,4}{
		\draw (\xx+3.5,\yy+\y+0.5) node[black] {$i$};
	}

	\draw (\xx+1.5,\yy+4.5) node[black] {$i$};
	\draw (\xx+2.5,\yy+4.5) node[black] {$i$};
	\draw (\xx+4.5,\yy+4.5) node[black] {$i$};
	
	\draw (\xx+1.5,\yy+1.5) node[black] {$i$};
	\draw (\xx+4.5,\yy+3.5) node[black] {$i$};
	\draw (\xx+2.5,\yy+2.5) node[black] {$i$};

	\draw[->]  (42,2.5) to (44,2.5);

	\def\xx{0}
	\def\yy{-7}

	\draw[step=5, black, very thin,xshift=\xx cm, yshift=\yy cm] (0,0) grid +(5,5);
	\draw[step=1, gray, very thin,xshift=\xx cm, yshift=\yy cm] (0,0) grid +(5,5);

	\foreach \y in {0,...,4}{
		\draw (\xx+0.5,\yy+\y+0.5) node[black] {$i$};
	}
	\foreach \x in {1,...,4}{
		\draw (\xx+\x+0.5,\yy+4.5) node[black] {$i$};
	}
	
	\draw (\xx+1.5,\yy+1.5) node[black] {$i$};
	\draw (\xx+2.5,\yy+2.5) node[black] {$i$};
	\draw (\xx+3.5,\yy+0.5) node[black] {$i$};
	\draw (\xx+4.5,\yy+3.5) node[black] {$i$};

	\draw[->]  (6,-4.5) to (8,-4.5);

	\def\xx{9}
	\def\yy{-7}

	\draw[step=5, black, very thin,xshift=\xx cm, yshift=\yy cm] (0,0) grid +(5,5);
	\draw[step=1, gray, very thin,xshift=\xx cm, yshift=\yy cm] (0,0) grid +(5,5);

	\foreach \y in {0,...,4}{
		\draw (\xx+0.5,\yy+\y+0.5) node[black] {$i$};
	}
	\foreach \x in {1,...,4}{
		\draw (\xx+\x+0.5,\yy+0.5) node[black] {$i$};
	}
	
	\draw (\xx+1.5,\yy+1.5) node[black] {$i$};
	\draw (\xx+2.5,\yy+2.5) node[black] {$i$};
	\draw (\xx+3.5,\yy+4.5) node[black] {$i$};
	\draw (\xx+4.5,\yy+3.5) node[black] {$i$};

	\draw[->]  (15,-4.5) to (17,-4.5);

	\def\xx{18}
	\def\yy{-7}

	\draw[step=5, black, very thin,xshift=\xx cm, yshift=\yy cm] (0,0) grid +(5,5);
	\draw[step=1, gray, very thin,xshift=\xx cm, yshift=\yy cm] (0,0) grid +(5,5);

	\foreach \y in {1,...,4}{
		\draw (\xx+0.5,\yy+\y+0.5) node[black] {$i$};
	}
	\foreach \x in {1,...,4}{
		\draw (\xx+\x+0.5,\yy+1.5) node[black] {$i$};
	}
	
	\draw (\xx+3.5,\yy+0.5) node[black] {$i$};
	\draw (\xx+2.5,\yy+2.5) node[black] {$i$};
	\draw (\xx+3.5,\yy+4.5) node[black] {$i$};
	\draw (\xx+4.5,\yy+3.5) node[black] {$i$};

	\draw[->]  (24,-4.5) to (26,-4.5);

	\def\xx{27}
	\def\yy{-7}

	\draw[step=5, black, very thin,xshift=\xx cm, yshift=\yy cm] (0,0) grid +(5,5);
	\draw[step=1, gray, very thin,xshift=\xx cm, yshift=\yy cm] (0,0) grid +(5,5);

	\foreach \x in {0,...,4}{
		\draw (\xx+\x+0.5,\yy+4.5) node[black] {$i$};
	}

	\draw (\xx+3.5,\yy+0.5) node[black] {$i$};
	\draw (\xx+2.5,\yy+2.5) node[black] {$i$};
	\draw (\xx+1.5,\yy+1.5) node[black] {$i$};
	\draw (\xx+4.5,\yy+3.5) node[black] {$i$};
\end{tikzpicture}

\caption{\label{fig:rotatingcol}Illustration of the proof of \propref{z2path}.
We see here how to rotate an empty column in a good box.}
\end{figure}

\begin{figure}
\begin{tikzpicture}[scale=0.3, every node/.style={scale=0.5}]
	\def\xx{0}
	\def\yy{0}
	
	\draw[step=5, black, very thin,xshift=\xx cm, yshift=\yy cm] (0,0) grid +(5,5);
	\draw[step=5, black, very thin,xshift=\xx cm, yshift=\yy cm] (0,5) grid +(5,5);
	\draw[step=5, black, very thin,xshift=\xx cm, yshift=\yy cm] (5,5) grid +(5,5);
	
	\draw[step=1, gray, very thin,xshift=\xx cm, yshift=\yy cm] (0,0) grid +(5,5);
	\draw[step=1, gray, very thin,xshift=\xx cm, yshift=\yy cm] (0,5) grid +(5,5);
	\draw[step=1, gray, very thin,xshift=\xx cm, yshift=\yy cm] (5,5) grid +(5,5);
	
	\foreach \x in {0,...,4}{
		\draw (\xx+\x+0.5,\yy+0.5) node[black] {$i$};
	}

	\draw (\xx+2.5,\yy+1.5) node[black] {$i$};
	\draw (\xx+4.5,\yy+2.5) node[black] {$i$};
	\draw (\xx+3.5,\yy+3.5) node[black] {$i$};
	\draw (\xx+1.5,\yy+4.5) node[black] {$i$};

	\draw (\xx+2.5,\yy+5.5) node[black] {$i$};
	\draw (\xx+0.5,\yy+6.5) node[black] {$i$};
	\draw (\xx+1.5,\yy+7.5) node[black] {$i$};
	\draw (\xx+4.5,\yy+8.5) node[black] {$i$};
	\draw (\xx+3.5,\yy+9.5) node[black] {$i$};
	
	\draw (\xx+5.5,\yy+8.5) node[black] {$i$};
	\draw (\xx+6.5,\yy+6.5) node[black] {$i$};
	\draw (\xx+7.5,\yy+7.5) node[black] {$i$};
	\draw (\xx+8.5,\yy+5.5) node[black] {$i$};
	\draw (\xx+9.5,\yy+9.5) node[black] {$i$};
	
	\draw[->]  (11,5) to (13,5);

	\def\xx{14}
	\def\yy{0}
	
	\draw[step=5, black, very thin,xshift=\xx cm, yshift=\yy cm] (0,0) grid +(5,5);
	\draw[step=5, black, very thin,xshift=\xx cm, yshift=\yy cm] (0,5) grid +(5,5);
	\draw[step=5, black, very thin,xshift=\xx cm, yshift=\yy cm] (5,5) grid +(5,5);
	
	\draw[step=1, gray, very thin,xshift=\xx cm, yshift=\yy cm] (0,0) grid +(5,5);
	\draw[step=1, gray, very thin,xshift=\xx cm, yshift=\yy cm] (0,5) grid +(5,5);
	\draw[step=1, gray, very thin,xshift=\xx cm, yshift=\yy cm] (5,5) grid +(5,5);

	\draw (\xx+0.5,\yy+0.5) node[black] {$i$};
	
	\foreach \x in {0,...,4}{
		\draw (\xx+\x+0.5,\yy+1.5) node[black] {$i$};
	}

	\draw (\xx+4.5,\yy+2.5) node[black] {$i$};
	\draw (\xx+3.5,\yy+3.5) node[black] {$i$};
	\draw (\xx+1.5,\yy+4.5) node[black] {$i$};	

	\draw (\xx+2.5,\yy+5.5) node[black] {$i$};
	\draw (\xx+0.5,\yy+6.5) node[black] {$i$};
	\draw (\xx+1.5,\yy+7.5) node[black] {$i$};
	\draw (\xx+4.5,\yy+8.5) node[black] {$i$};
	\draw (\xx+3.5,\yy+9.5) node[black] {$i$};
	
	\draw (\xx+5.5,\yy+8.5) node[black] {$i$};
	\draw (\xx+6.5,\yy+6.5) node[black] {$i$};
	\draw (\xx+7.5,\yy+7.5) node[black] {$i$};
	\draw (\xx+8.5,\yy+5.5) node[black] {$i$};
	\draw (\xx+9.5,\yy+9.5) node[black] {$i$};

	\draw[->]  (25,5) to (27,5);

	\def\xx{28}
	\def\yy{0}

	\draw[step=5, black, very thin,xshift=\xx cm, yshift=\yy cm] (0,0) grid +(5,5);
	\draw[step=5, black, very thin,xshift=\xx cm, yshift=\yy cm] (0,5) grid +(5,5);
	\draw[step=5, black, very thin,xshift=\xx cm, yshift=\yy cm] (5,5) grid +(5,5);
	
	\draw[step=1, gray, very thin,xshift=\xx cm, yshift=\yy cm] (0,0) grid +(5,5);
	\draw[step=1, gray, very thin,xshift=\xx cm, yshift=\yy cm] (0,5) grid +(5,5);
	\draw[step=1, gray, very thin,xshift=\xx cm, yshift=\yy cm] (5,5) grid +(5,5);

	\draw (\xx+0.5,\yy+0.5) node[black] {$i$};
	\draw (\xx+2.5,\yy+1.5) node[black] {$i$};
	\draw (\xx+4.5,\yy+2.5) node[black] {$i$};
	\draw (\xx+3.5,\yy+3.5) node[black] {$i$};
	\draw (\xx+1.5,\yy+4.5) node[black] {$i$};	
	
	\foreach \x in {0,...,4}{
		\draw (\xx+\x+0.5,\yy+5.5) node[black] {$i$};
	}

	\draw (\xx+0.5,\yy+6.5) node[black] {$i$};
	\draw (\xx+1.5,\yy+7.5) node[black] {$i$};
	\draw (\xx+4.5,\yy+8.5) node[black] {$i$};
	\draw (\xx+3.5,\yy+9.5) node[black] {$i$};
	
	\draw (\xx+5.5,\yy+8.5) node[black] {$i$};
	\draw (\xx+6.5,\yy+6.5) node[black] {$i$};
	\draw (\xx+7.5,\yy+7.5) node[black] {$i$};
	\draw (\xx+8.5,\yy+5.5) node[black] {$i$};
	\draw (\xx+9.5,\yy+9.5) node[black] {$i$};

	\draw[->]  (39,5) to (41,5);

	\def\xx{4}
	\def\yy{-13}

	\draw[step=5, black, very thin,xshift=\xx cm, yshift=\yy cm] (0,0) grid +(5,5);
	\draw[step=5, black, very thin,xshift=\xx cm, yshift=\yy cm] (0,5) grid +(5,5);
	\draw[step=5, black, very thin,xshift=\xx cm, yshift=\yy cm] (5,5) grid +(5,5);
	
	\draw[step=1, gray, very thin,xshift=\xx cm, yshift=\yy cm] (0,0) grid +(5,5);
	\draw[step=1, gray, very thin,xshift=\xx cm, yshift=\yy cm] (0,5) grid +(5,5);
	\draw[step=1, gray, very thin,xshift=\xx cm, yshift=\yy cm] (5,5) grid +(5,5);

	\draw (\xx+0.5,\yy+0.5) node[black] {$i$};
	\draw (\xx+2.5,\yy+1.5) node[black] {$i$};
	\draw (\xx+4.5,\yy+2.5) node[black] {$i$};
	\draw (\xx+3.5,\yy+3.5) node[black] {$i$};
	\draw (\xx+1.5,\yy+4.5) node[black] {$i$};	
	
	\foreach \y in {0,...,4}{
		\draw (\xx+4.5,\yy+\y+5.5) node[black] {$i$};
	}

	\draw (\xx+2.5,\yy+5.5) node[black] {$i$};
	\draw (\xx+0.5,\yy+6.5) node[black] {$i$};
	\draw (\xx+1.5,\yy+7.5) node[black] {$i$};
	\draw (\xx+3.5,\yy+9.5) node[black] {$i$};
	
	\draw (\xx+5.5,\yy+8.5) node[black] {$i$};
	\draw (\xx+6.5,\yy+6.5) node[black] {$i$};
	\draw (\xx+7.5,\yy+7.5) node[black] {$i$};
	\draw (\xx+8.5,\yy+5.5) node[black] {$i$};
	\draw (\xx+9.5,\yy+9.5) node[black] {$i$};

	\draw[->]  (15,-8) to (17,-8);

	\def\xx{18}
	\def\yy{-13}

	\draw[step=5, black, very thin,xshift=\xx cm, yshift=\yy cm] (0,0) grid +(5,5);
	\draw[step=5, black, very thin,xshift=\xx cm, yshift=\yy cm] (0,5) grid +(5,5);
	\draw[step=5, black, very thin,xshift=\xx cm, yshift=\yy cm] (5,5) grid +(5,5);
	
	\draw[step=1, gray, very thin,xshift=\xx cm, yshift=\yy cm] (0,0) grid +(5,5);
	\draw[step=1, gray, very thin,xshift=\xx cm, yshift=\yy cm] (0,5) grid +(5,5);
	\draw[step=1, gray, very thin,xshift=\xx cm, yshift=\yy cm] (5,5) grid +(5,5);

	\draw (\xx+0.5,\yy+0.5) node[black] {$i$};
	\draw (\xx+2.5,\yy+1.5) node[black] {$i$};
	\draw (\xx+4.5,\yy+2.5) node[black] {$i$};
	\draw (\xx+3.5,\yy+3.5) node[black] {$i$};
	\draw (\xx+1.5,\yy+4.5) node[black] {$i$};	

	\draw (\xx+2.5,\yy+5.5) node[black] {$i$};
	\draw (\xx+0.5,\yy+6.5) node[black] {$i$};
	\draw (\xx+1.5,\yy+7.5) node[black] {$i$};
	\draw (\xx+3.5,\yy+9.5) node[black] {$i$};
	\draw (\xx+4.5,\yy+8.5) node[black] {$i$};
	
	\draw (\xx+5.5,\yy+8.5) node[black] {$i$};
	\draw (\xx+6.5,\yy+6.5) node[black] {$i$};
	\draw (\xx+7.5,\yy+7.5) node[black] {$i$};
	\draw (\xx+8.5,\yy+5.5) node[black] {$i$};
	
	\foreach \y in {0,...,4}{
		\draw (\xx+9.5,\yy+\y+5.5) node[black] {$i$};
	}
	
\end{tikzpicture}
\caption{\label{fig:z2path}Illustration of the proof of \propref{z2path}.
We see here how to propagate infection through a good path.}
\end{figure}
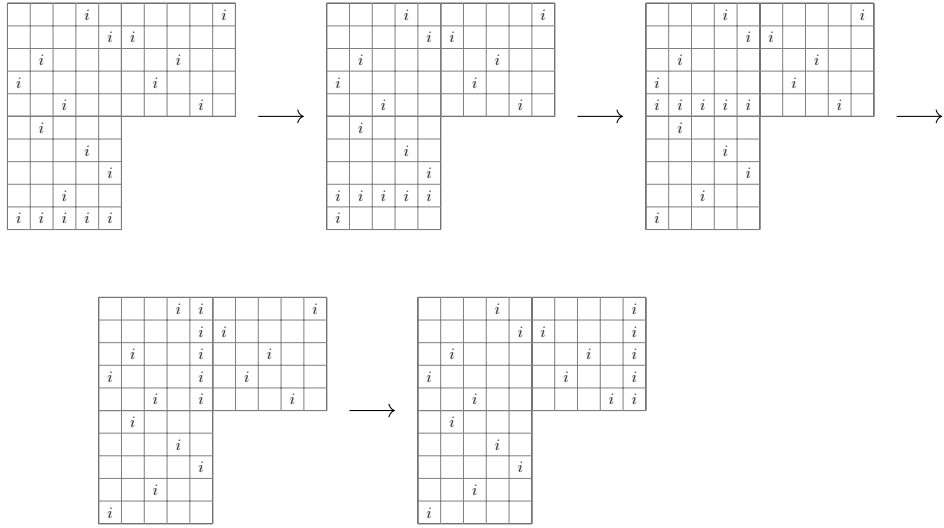

This concludes the proof of the upper bound of $\tau_0$.  For $q$ small enough $N^{2}\binom{V}{D}2^{D}q^{-D-2}\le e^{-q^{-1-\varepsilon}}$,
and by \propref{yoyoma}
\[
\pp_{\mu}\left(\tau_{A}>e^{-q^{-1-\varepsilon}}\right)\le2q+p_{\text{SG}}\left(\omega\right)+\sqrt{p_{\text{SG}}\left(\omega\right)}.
\]
In particular, if $\omega$ is low pollution $\pp_{\mu}\left(\tau_{A}>e^{-q^{-1-\varepsilon}}\right)\le3q^{\varepsilon/12}$.

\section{KCM on polluted $\zz^{3}$}

The upper bound, just as the two dimensional case, is given by \lemref{upperbound_bp}
and the results of \cite{CerfManzo}.

For the lower bound, we will use the ideas of \cite{GravnerHolroyd_polluted} in order to construct
a path that empties the origin and satisfies the hypotheses of \lemref{path_to_time}. In contrast to the infection mechanism in the bootstrap percolation (\cite{GravnerHolroyd_polluted}), we will need a finer path, that not only allows us to empty the origin, but also keeps the configuration close to the original one in order to avoid large energy barrier and entropic price.

A move is denote by a pair $(x,a)$ where $x\in \susc$ and $a\in \{i,h\}$.  A move is legal in $\eta$ if $c_x(\eta) = 1$.  A finite sequence of moves is given by $\Gamma = (x_i,a_i)_{i=1}^k$.  Starting from the configuration $\eta$, we let $\Gamma_i(\eta)$ denote the configuration obtained by applying the first $i$ moves in $\Gamma$.  The sequence $\Gamma$ is legal with respect to the initial configuration $\eta$ if each move from $\Gamma_i(\eta)$ to $\Gamma_{i+1}(\eta)$ is legal.  We let $\Gamma(\eta)$ denote the configuration obtained by applying the entire sequence of moves in $\Gamma$ starting from $\eta$. 

Note that if the change from $\eta$ to $\eta'$ is a legal move, then the change from $\eta'$ to $\eta$ is also a legal move.  Similarly, if $\Gamma$ is a legal sequence starting from $\eta$ and ending at $\eta'$ then the reverse of $\Gamma$, denoted $\rev(\Gamma)$, is a legal sequence starting from $\eta'$ and ending at $\eta.$  

We denote the concatenation of two sequences, $\Gamma$ and $\Gamma'$ by $\Gamma  + \Gamma'$.  If $\Gamma$ is a legal sequence of moves from $\eta$ to $\eta'$, and $\Gamma'$ is a legal sequence of moves from $\eta'$ to $\eta''$ then $\Gamma + \Gamma'$ is legal sequence of moves from $\eta$ to $\eta''$.

\begin{lem}\label{backandforth}
Let $\Gamma$ denote a sequence of legal moves that starts at $\eta$ and ends at $\eta'$.  For any set of susceptible sites $X$, Let $\Gamma^{X}$ denote the same set of moves as $\Gamma$ except that any move that would cure a site in $X$ is ignored.  Then $\Gamma^X$ is also a legal set of moves that start at $\eta$ and ends at $\eta'' = \Gamma^X_k(\eta)$ where for all $i\leq k$ the only difference between $\Gamma_i(\eta)$ and $\Gamma^X_i(\eta)$ lies on the set $X$.
  \end{lem}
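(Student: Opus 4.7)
The plan is to prove both conclusions of the lemma simultaneously by induction on $i \in \{0, 1, \ldots, k\}$, strengthening the invariant with a monotonicity statement. Concretely, at every step $i$ the configurations $\Gamma_i(\eta)$ and $\Gamma^X_i(\eta)$ should (a) agree on $\susc \setminus X$ and (b) satisfy the domination that every site of $X$ which is infected in $\Gamma_i(\eta)$ is also infected in $\Gamma^X_i(\eta)$. Part (a) is the conclusion of the lemma; part (b) is the extra ingredient, reflecting the idea that discarding only curing moves on $X$ can only keep the modified trajectory ``at least as infected'' as the original.

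The base case $i=0$ is trivial since $\Gamma_0(\eta) = \Gamma^X_0(\eta) = \eta$. For the inductive step, denote the $(i+1)$-th move by $(x_i, a_i)$ and split into two cases. If the move is an infection, or a cure at a site outside $X$, then it is performed in both sequences. Legality in $\Gamma^X$ follows from the monotonicity of the constraint $c_{x_i}$ in \eqref{eq:constraints}: requiring two infected susceptible neighbors of $x_i$ can only become easier under addition of infections, so legality in $\Gamma_i(\eta)$ combined with the domination (b) forces legality in $\Gamma^X_i(\eta)$. After the move both configurations change identically at $x_i$, so (a) and (b) persist. If instead the move is a cure at a site $x_i \in X$, it is skipped in $\Gamma^X$, giving $\Gamma^X_{i+1}(\eta) = \Gamma^X_i(\eta)$, while $\Gamma_{i+1}(\eta)$ differs from $\Gamma_i(\eta)$ only by curing $x_i$; both (a) and (b) are preserved because the only modification took place inside $X$ and made the original configuration healthier at $x_i$, never less infected than the modified one.

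Applying the invariant at $i = k$ yields both legality of every intermediate step and agreement outside $X$, which is the full statement of the lemma. The only substantive ingredient is the monotonicity of $c_x$, immediate from its definition; everything else is direct bookkeeping. The main conceptual point is the choice of the strengthened invariant, in particular the domination (b) on $X$, without which the legality check for moves acted upon in both sequences could not be carried out.
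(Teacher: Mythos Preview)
Your proof is correct and follows essentially the same approach as the paper's. Both arguments rest on the monotonicity of the constraint $c_x$ and establish inductively that the censored trajectory $\Gamma^X_i(\eta)$ agrees with $\Gamma_i(\eta)$ outside $X$ while dominating it (in the sense of infected sites) on $X$; the paper phrases this domination slightly more concretely as $\Gamma^X_i(\eta) = (\Gamma_i(\eta))^{Y_i\leftarrow i}$ for a suitable $Y_i\subseteq X$, but the content is the same.
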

  
\begin{proof}

This is a consequence of the fact that more infected sites can only help the constraints to be satisfied.  Let $Y_i$ denote the subset of $X$ that is infected for some $j\leq i$ in $\Gamma_i(\eta)$.    
For each $i$, $\Gamma^X_i(\eta) = (\Gamma_i(\eta))^{Y_i\leftarrow i}$, thus the infected sites in $\Gamma_i(\eta)$ are a subset of the infected sites in $\Gamma_i^X(\eta)$.  Thus moves in $\Gamma^X$ are legal if they are legal in $\Gamma.$  Finally $\eta'' = (\eta')^{Y_k\leftarrow i} \in (\eta')^{(X)}$ and thus the last line in the lemma holds.
\end{proof}

We repeat the definitions from \cite{GravnerHolroyd_polluted} of the objects necessary for our work.  Note that some of the details in the definitions are slightly modified to fit within the framework of our proof. 

For $k\in \Z$ we let $\Lambda_k$ denote $\Z^{2} \times \{k\}$ and refer to this as the $k$th layer of $\Z^3.$  We let $e_1$, $e_2,$ and $e_3$ denote the unit vectors in each of the cardinal directions.   

The \emph{standard brick} is a the collection of sites $[0,4L)\times[0,16L)\times[0,32L).$  The \emph{base} of the standard brick is the bottom half $[0,4L)\times [0,16L) \times [0,16L).$  The \emph{top} of the standard brick is the remaining half of the brick.  The \emph{sections} of the standard brick are the sets $[0,4L)\times [0,16L) \times [4jL, 4(j+1)L)$ for $0\leq j \leq 7$.  The \emph{tip} of the standard brick is the set $[0,4L) \times [0,4L), \times [16L,32L)$, and has the same dimensions as a section, though a different orientation.  The \emph{anchor} of the standard brick is the site $(4L,16L,0)$ and the \emph{flag} is the site $(0,0,32L).$  Note that the standard brick contains neither its anchor nor flag, though the flag is on the boundary of the tip and the anchor is corner opposite to the flag on the boundary of the brick. 
   
A proto-brick, $\hat{B},$ is the set of vertices $[0,4L) \times [0,4L) \times [0,2L)$ that lies in a different $\Z^3$ from the standard brick.  For each $\hat{x}=(\hat{x}_1,\hat{x}_2,\hat{x}_3)\in \hat{B}$ define $cell(x) = (x_1,4x_2,16x_3) + \{0\}\times [0,4) \times [0,16).$  The standard brick is connected to the proto-brick by $$B = \bigcup_{\hat{x}\in \hat{B}} cell(\hat{x}).$$ 
A vertex is susceptible if every site in the corresponding cell is susceptible.  

\begin{defn}

Let $\hat{B}$ be a proto-brick with corresponding brick $B$ in standard position.  The brick $B$ is good if there exists a set $\hat{S}\subseteq \hat{B}$ with the following properties
	
	\begin{enumerate}
		\item All vertices in the following set are susceptible:
$$ \sigma(\hat{S}):= \{ \hat{x}, \hat{x}+ \hat{e}_3, \hat{x}-\hat{e}_1-\hat{e}_2+\hat{e}_3: \hat{x}\in \hat{S}\}\cap \hat{B};$$
\item for all $\hat{x} \in \hat{S}$, except for $\hat{x}$ in the bottom layer of $\hat{B},$ satisfies either 
 $$ \hat{x} - \hat{e}_3 \in \hat{S}, \text{ or }$$   $$\hat{x} +\hat{e}_1+\hat{e}_2 -\hat{e}_3 \in \hat{S};$$  
\item $\hat{S}\subseteq \{ \hat{x}: 3L < \hat{x}_1 + \hat{x}_2 + \hat{x}_3 < 4L\};$
\item for $0\leq k \leq 2L-1$, $\hat{S}\cap \Lambda_k$ is an oriented path from $\{\hat{x}_1=0\}$ to $\{\hat{x}_2=0\}$ with steps $\hat{e}_1$ or $-\hat{e}_2$ where no three consecutive steps are the same; 
\item The site $(L+1, 4L+4, 16L)S$ is contained in $S$, where $$S = \bigcup_{\hat{x}\in \hat{S}} cell(\hat{x}).$$
\item for $0\leq k < 32L-1$, there is an infected site in $S\cap\Lambda_k + e_3.$  
	\end{enumerate}

\end{defn}
  
The first five conditions for a good brick depend only on the initial random set $\susc$.  The last condition also depends on the configuration on $\susc$.

The set $S$ is called the sail of $B$.  The set $$\bar{S} = S \cup \{S+ e_3\} \cap B$$ is called the thick sail of $B$.   

For a fixed $\hat{x}\in \hat{B}$ and some value $0\leq j \leq 15$ we call the set $(x_1,x_2,x_3) + \{0\} \times [0,4) \times \{j\}$ the $j$th \emph{unit} of $cell(\hat{x})$.  Similarly, for fixed $0 \leq j \leq 3$, the set $(x_1,x_2,x_3) + \{0\}\times \{j\} \times [0,16)$ is called a \emph{strip}.  A cell consists of 16 units or 4 strips.

The set $B_i = \Lambda_i \cap B$ is the $i$th \emph{layer} of $B$.  The sites in a unit of a cell all lie on the same layer, whereas the sites of a strip in a cell lie across 16 different layers.  

If each of the units in a layer is the 15th unit in a cell, we say the layer is a \emph{transition} layer.  Otherwise it is an internal layer.  Note the only sites in $\bar{S}\backslash S$ are (possibly) those that lie directly above a transition layer.

We denote the layers of a sail as the sets $S_i = \Lambda_i \cap S.$  If $S_i$ is in an internal layer, then $S_{i+1} = S_i + e_3$.  The layer $S_i$ is an oriented path of units with subsequent units differing by either $e_1$ or $-4e_2$.  The path has two types of corner units:  an exterior corner unit reached by $e_1$ from the previous unit and followed by $-4e_2$ to the next unit, and an interior corner unit reached by $-4e_2$ and followed by $e_1$.  For $i< 32L-1$, if $S_i$ is a transition layer then for every $u \in S_i$ either $u+e_3 \in S_{i+1}$ or $u+e_3-e_1-4e_2 \in S_{i+1}.$   

A general brick, $B$, is given by an anchor and flag in $\Z^3$ that differ by $(d_1,d_2,d_3)$ given by some rearrangement of the vector $(4L,16L,32L)$.  The base, top, sections, layers, and tip of a general brick is the reorientation of the those of the standard brick to fit with the new choice of anchor and flag. The definitions of sails, units, strips, etc. are also all modified according to the new orientation.

In \cite{GravnerHolroyd_polluted} they essentially show that a good sail can become completely infected under the bootstrap percolation dynamics if the bottom layer is completely infected.  We need a refinement of this result, as we do not wish to infect everything, but instead we wish to move the infection from some set to another in $\Z^3$ whilst not increasing the number of infected sites of the configurations restricted to a finite domain by too much.  

The following series of lemmas will show how to propagate infection from certain sets of sites in the base of $B$ to any collection of sites in the tip of $B$.  

\begin{lem}\label{linebyline}

Let $\eta$ be an initial configuration on the brick $B$.  Suppose $B$ is good with respect to $\eta$ and let $S$ denote the sail of $B$.  Fix $i$ and consider the configuration $\eta' = \eta^{S_i \leftarrow i }$.  Then there exists a sequence of legal moves from $\eta'$ to $\eta''$ of length at most $CL$ where every site in $S_{i+1}$ becomes infected in $\eta''$, and $\eta''$ agrees with $\eta$ outside of $S_{i+1}$ and possibly a set of sites consisting of at most 2 boundary sites in $S_i$ and 16 sites in $S_i+e_3$ that lie on the boundary of $B$.
	
\end{lem}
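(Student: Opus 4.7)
The plan is to construct the sequence of moves in two phases. In a \emph{forward} phase we propagate the infection from $S_i$ up to $S_{i+1}$, and in a \emph{reverse} phase we cure the sites of $S_i$ that were originally healthy, using the now-infected $S_{i+1}$ as support. Throughout, we use two properties of a good brick: condition 6 supplies an initially infected site in $S_i+e_3$, which serves as a seed for the propagation, and condition 1 ensures that all the cells we will touch above $S_i$, namely those in $\sigma(\hat S)$, are susceptible.

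For the forward phase, consider first the case where $S_i$ is an \emph{internal} layer, so that $S_{i+1}=S_i+e_3$ and every site of $S_{i+1}$ sits directly above an infected site of $S_i$. Starting from the seed in $S_{i+1}$, we propagate the infection along the oriented path of units of the sail: within a single unit (a segment of four sites in the $e_2$-direction), sites are infected one by one, each using its infected below-neighbor in $S_i$ together with the already-infected adjacent site inside the unit; consecutive units of the path either share a full face (an $\hat{e}_1$-step), allowing each of the four sites of the new unit to be infected using its below-neighbor and its $e_1$-translate in the previous unit, or they share a single corner site (a $-\hat{e}_2$-step), which is infected first and then propagates within the new unit as before. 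When $S_i$ is a \emph{transition} layer, some units of $S_{i+1}$ are shifted by $-e_1-4e_2+e_3$ rather than by $e_3$ from their counterparts in $S_i$; for these, condition 1 guarantees that the intermediate cell $\hat x+\hat e_3$ is fully susceptible, and we temporarily infect at most one strip's worth ($16$ sites) of those intermediate sites lying on the boundary of $B$, using them as stepping-stones to cross the geometric shift. Once a site of $S_{i+1}$ is reached the propagation continues exactly as in the internal case, yielding $S_{i+1}$ fully infected after at most $O(L)$ moves.

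For the reverse phase, we cure every site $v \in S_i$ that is healthy in $\eta$. The site $v+e_3 \in S_{i+1}$ (or the analogous stepping-stone site in the transition case) is infected and provides one of the two neighbors required for the cure-move to be legal; for the second neighbor, we process the units of $S_i$ from the two ends of the path inwards and, within each unit, pick the cure order as dictated by the type of connection with the next un-cured unit, so that every site being cured retains at least one infected neighbor in its unit or in the next still-infected unit of the path. At the two geometric endpoints of the path (lying on the $\{\hat x_1=0\}$ and $\{\hat x_2=0\}$ faces of $\hat B$, hence on the boundary of $B$), the very last sites to be cured may fail to have a second infected neighbor and are left infected; these are the ``at most $2$ boundary sites in $S_i$'' allowed by the statement, and the analogous obstruction at the stepping-stones of the transition case accounts for the ``$16$ sites in $S_i+e_3$ on the boundary of $B$''.

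The forward and reverse phases each use $O(L)$ moves since $|S_i|,|S_{i+1}|=O(L)$ (the sail path has at most $8L$ units, each with $4$ sites), yielding the total bound $CL$. By construction $\eta''$ differs from $\eta$ only on $S_{i+1}$, on at most two boundary sites of $S_i$, and on at most $16$ stepping-stone sites of $S_i+e_3$ on the boundary of $B$. The main technical obstacle is the reverse phase: the cure-moves must be ordered to keep two infected neighbors available at every step, and it is precisely at the two geometric endpoints of the sail path that this cannot be fully achieved, giving rise to the unavoidable boundary sites.
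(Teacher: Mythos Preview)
Your two-phase decomposition (infect $S_{i+1}$, then restore $S_i$) is a reasonable alternative to the paper's interleaved infect/cure, and for \emph{internal} layers it works after one small fix: if you run the reverse phase ``from the two ends of the path inwards'' then the site left with only one infected neighbour is in the \emph{middle} of the path, not on the boundary. To make the leftover land on $\partial B$ you must sweep outwards (or from one end to the other), which is exactly what the paper's interleaved procedure does implicitly, starting from the seed.

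The transition case has a genuine gap. When $S_i$ is a transition layer, a positive fraction (in general $O(L)$) of the units of $S_{i+1}$ are shifted by $-e_1-4e_2$ relative to $S_i+e_3$; for every such unit $v\subset S_{i+1}$ the sites $v-e_3$ lie \emph{outside} $S_i$ and hence are not infected in $\eta'$. Your forward phase (``propagation continues exactly as in the internal case'') therefore stalls at every shifted unit, not just near the boundary, and sixteen stepping-stone sites are nowhere near sufficient. The same obstruction breaks your reverse phase: for $v\in S_i$ sitting under a shifted cell one has $v+e_3\notin S_{i+1}$, so the ``above'' support you invoke is simply not there once $S_{i+1}$ is what you have infected. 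The paper avoids this by never targeting $S_{i+1}$ directly: it first infects all of $S_i+e_3$ (which really is $S_i$ translated by $e_3$) while restoring $S_i$, and only then, working entirely inside layer $i+1$, slides the infected path from $P_0=S_i+e_3$ to $S_{i+1}$ by repeatedly replacing an external corner unit $u$ with $u-e_1-4e_2$, using the two infected neighbours $u-e_1$ and $u-4e_2$. The ``$16$ sites in $S_i+e_3$'' in the statement are the at most four boundary units that this corner-shifting cannot erase at the end; they are residue, not a bound on the number of intermediate sites used.
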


\begin{proof}

Without loss of generality assume $B$ is the standard brick.  By property $5$ of a good brick there exists some infected site in $S_{i}+e_3$.  We proceed unit by unit. Suppose $u$ is a unit in $S_{i} + e_3$ that contains an infected site, $z$.  First we may cure $z-e_3\in S_i.$  Then we may infected a site in $S_i+e_3$ connected to the already emptied sites of $S_i+e_3$ and cure the corresponding site in $S_i$ below the newly emptied site.  The set $S_i + e_3$ is connected so we may proceed until every site in $S_i + e_3$ is infected and all but one site in each of the boundary units of $S_i$ are healthy.  

If $S_i$ is an internal layer then $S_{i+1} = S_i + e_3$ and we are done.  Otherwise suppose $S_i$ is a transition layer and let $P_0 = S_i + e_3$ be viewed as a path of units.  For $l\geq 0$ we proceed inductively.  Suppose $P_l$ is a path of infected units consisting of steps $e_1$ and $-4e_2$ such that for every $u\in P_l$, either $u\in S_{i+1}$ or $u-e_1-4e_2\in S_{i+1}$.  If $P_l$ contains $S_{i+1}$ then it differs from $S{i+1}$ by at most four boundary units and we are done.  Otherwise there exists some non-boundary external corner unit $u \in P_l$ such that $u\notin S_{i+1}$ and $u-e_1-4e_2 \in P_l$.  Since $u$ is an external corner unit, $u-e_1$ and $u-4e_2\in P_l$.  Thus we infected every site in $u-e_1-4e_2$ and cure $u$.  This creates a new path of units $P_{l+1}$ that satisfies either $u'\in S_{i+1}$ or $u'-e_1-4e_2 \in S_{i+1}$ for every $u'\in P_{l+1}.$  Continue until no external corner units exists.  This final path of infected units will contain all of $S_{i+1}$ plus at most 4 boundary units of $S_{i} +e_3$.   
	
\end{proof}

The following lemma generalizes the statement of Lemma \ref{linebyline}.  

\begin{lem}\label{separator}
Let $\eta$ be an initial configuration on the brick $B$.  Suppose $B$ is good with respect to $\eta$ and let $S$ denote the sail of $B$.  Let $V$ be a set of sites restricted to a single section in the base of $B$, such that $V$ separates $B$ into two connected components $B^-$ and $B^+$, where $B^+$ is the part containing the top of $B$.  Let $X_1 = V\cap \bar{S}$ and $X_2$ be any subset of $B^+\cap\bar{S}$.  There is a sequence of legal moves of length $O(L^2)$, starting from $\eta^{X_1\leftarrow i}$ and ending at $\eta^{ X_1\cup X_2 \leftarrow i }$, where at any time in the sequence the configuration differs from $\eta$ on a set of at most $|X_1| + |X_2| + O(L)$ sites.

\end{lem}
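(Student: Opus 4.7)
The plan is to pump an infection wave of width one sail-layer through the thick sail $\bar S$, travelling from the cross-section $X_1$ up through $B^+$ to (at worst) the top of $B$ and then retreating back down to where it started. As the wave sweeps through a sail layer meeting $X_2$, the sites of $X_2$ on that layer are dropped off---never cured---while $X_1$ is also preserved throughout. Both of these selective preservations are realized by invoking Lemma \ref{backandforth} with $X=X_1\cup X_2$ on top of the baseline propagation.

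\textbf{Forward phase.} Starting from $\eta^{X_1\leftarrow i}$, iterate Lemma \ref{linebyline} to carry the infected sail layer from the separator section containing $V$ upward through $B^+$. Each layer-to-layer transition costs $O(L)$ moves, and $O(L)$ transitions suffice to traverse the part of the sail in $B^+$, so the forward phase uses $O(L^2)$ moves in total. Applying Lemma \ref{backandforth} with $X=X_1\cup X_2$ deletes any cure that would touch $X_1\cup X_2$, producing a legal sequence in which $X_1$ is always infected and each site of $X_2$ becomes and then stays infected from the first time the wave visits its layer onward.

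\textbf{Backward phase.} After the wave reaches the top of $B$, run the time-reversal of the unmodified forward sequence---which by itself would cure the wave layer by layer back down to $\eta^{X_1\leftarrow i}$---and again apply Lemma \ref{backandforth} with the same $X$. In the present state (with $X_1\cup X_2$ extra infected), the reversed sequence remains legal because more infection only relaxes constraints, and by construction it cures only the transient wave. When the wave has fully retreated, the configuration is $\eta^{X_1\cup X_2\leftarrow i}$, as desired. The backward phase also contributes $O(L^2)$ moves.

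\textbf{Invariant and bottleneck.} At every intermediate time the configuration differs from $\eta$ on (i) $X_1$, (ii) the subset of $X_2$ already visited, (iii) the current wave (one sail layer, $O(L)$ sites), and (iv) the $O(1)$ per-transition boundary corrections of Lemma \ref{linebyline}, which accumulate to at most $O(L)$ over the whole sail. This delivers the required bound $|X_1|+|X_2|+O(L)$. The main obstacle is initializing the wave from $X_1$: because $V$ is a separator, $X_1$ is a cut of $\bar S$ but need not itself coincide with a complete sail layer, so a short preliminary sweep (again shielded by Lemma \ref{backandforth}) is needed to fill $X_1$ up to the first full sail layer above $V$ before the main iteration can begin. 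Legality throughout the forward--backward composition then reduces directly to Lemma \ref{backandforth}, whose content is precisely that extra infected sites never spoil a constraint.
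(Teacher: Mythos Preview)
Your overall architecture (forward wave up the sail, backward wave down, preserving $X_1\cup X_2$ via Lemma~\ref{backandforth}) matches the paper's, and the backward phase in particular is exactly what the paper does. The gap is in the initialization, which you correctly flag but do not actually resolve.

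Lemma~\ref{linebyline} requires a \emph{full} sail layer $S_i$ to be infected before it can propagate. From $\eta^{X_1\leftarrow i}$ you only have $X_1=V\cap\bar S$ infected, and as you note $X_1$ need not be a full layer. Your proposed fix is a ``short preliminary sweep \ldots\ shielded by Lemma~\ref{backandforth}'', but Lemma~\ref{backandforth} takes as input a legal sequence and suppresses cures on a designated set; it cannot manufacture the legal sequence you need in the first place. So the preliminary sweep is simply not defined, and invoking Lemma~\ref{backandforth} here is a category error.

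The paper's device is different in kind. It first builds a virtual sequence $\Gamma'$ from $\eta^{S_i\leftarrow i}$, where $i$ is the lowest layer meeting $V\cap\bar S$ (so $S_i\subset V\cup B^-$), by iterating Lemma~\ref{linebyline}. It then censors \emph{every} move of $\Gamma'$ located in $B^-$ or on $X_1$---not just cures---to obtain a sequence $\Gamma$ starting from $\eta^{X_1\leftarrow i}$. This censoring is strictly stronger than Lemma~\ref{backandforth} and needs its own legality argument, which is where the separating hypothesis on $V$ is actually used: a move of $\Gamma'$ at a site $z\in B^+$ has its two infected witnesses either in $B^+$ (where $\Gamma$ and $\Gamma'$ agree by induction) or in $V\cap\bar S=X_1$ (permanently infected in $\Gamma$), so the move remains legal in $\Gamma$. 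This exploitation of the separator---that constraints at sites in $B^+$ never look into $B^-$---is the missing idea in your argument.
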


\begin{proof}

Without loss of generality assume $B$ is the standard brick.  Any results here apply by changing the orientation of the moves.  

Since $V$ is a separating set for $B$, The paths (of units), $S_j$ and $S_j+e_3$, are partitioned in to connected collections of sites are either entirely in $B^+$ or $B^-$.

Let $i$ denote the lowest layer in $B$ that contains a site in $V\cap \bar{S}$.  No site in $S_i$ is in $B^+$, otherwise $V$ would not separate $B$.  Thus each $x\in S_i$ is either in $V$ or $B^-$.  

Consider the initial configuration $\eta'=\eta^{S_i\leftarrow i}$ obtained by infecting every site in $S_i$ from $\eta$.  By Lemma \ref{linebyline} there exists a sequence of legal moves starting from $\eta'$ that infects every site in $S_{i+1}$ while leaving healthy all but 20 sites in $S_i$ and $S_{i} + e_3$.  Repeat this process, infecting subsequent lines, except for the boundary points.  Any time a site in $X_2$ is to become healthy, leave it infected.  Eventually every site in $X_2$ will become infected as this process will at some point infect every site in $S$ that lies above $S_i$, which includes all of $S\cap B^+$.  Infecting each subsequent line and curing the previous takes at most $CL$ steps and leaves behind at most 20 infected sites on each line that are not necessarily in $X_2$.  There are at most $32L$ layers that need to be infected in $\bar{S}\cap B$ before every site in $X_2$ has been infected.  The most number of infected sites in this process is $|\eta| +  |S_i| + |X_2| + O( L).$  Let $\Gamma'$ denote this sequence of legal moves that starts at $\eta'$ and ends with $X_2$ completely infected.  The number of moves in $\Gamma'$ is at most $O(L^2)$.

Each step in $\Gamma'$ consists of curing or infecting a site $x$, or leaving it unchanged.  Now consider the related censored sequence of moves, $\Gamma$, starting from $\eta'=\eta^{X_1\leftarrow i}$ defined as follows. Suppose $x$ is the site to be updated in $\Gamma'$ at time $t$.  If $x\in B^-$ or $x\in X_1$ then do nothing in $\Gamma$ at time $t$.  Otherwise act in the same way as $\Gamma'$.  We claim that for each $t$ the set of infected sites in $\Gamma'_t(\eta')\cap B^+$ is the same as the set of infected sites in $\Gamma_t(\eta^{S_i\leftarrow i})\cap B^+.$  Suppose for $t\geq 0$ the claim is true up to time $t$, and the move at time $t$ is at a site $z_t\in \bar{S}$.  In $\Gamma'_{t}(\eta')$, $z_t$ is a legal move, so there exists at least $2$ neighbors of $z_t$ that are in $\bar{S}$.  Those neighbors are either in $B^+$ or $X_1$.  If they are in $B^+$, then by the claim they must be in $\Gamma_t(\eta)$ and thus infected.  Otherwise they are in $X_1$ in which case they also must be infected in $\Gamma_t(\eta).$  In either case the move at $z_t$ is legal and therefore $\Gamma_{t+1}(\eta)(z_t) = \Gamma'_{t+1}(\eta')(z_t)$, and the claim remains true for $t+1$.

Thus every move in $\Gamma$ is legal.  Moreover, $X_2\subseteq B^+$ and thus $X_2$ is infected in $\Gamma_t(\eta^{X_1\leftarrow i})$ when it is infected in $\Gamma'_t(\eta').$  The configuration $\Gamma_t(\eta)$ differs from $\eta$ on a set of size at most $|X_1|+ |X_2| + O(L)$ and the length of $\Gamma$ equals the length of $\Gamma'$.

Once $X_2$ is completely infected, reverse the sequence of moves, except anytime a site in $X_1$ or $X_2$ is to be cured, ignore that move, thus leaving that site infected.  The resulting configuration will have sites that are infected only if they are either in $X_1$ or $X_2$ or were infected in $\eta.$  The full sequence will take at most twice the number of steps in $\Gamma$, $O( L^2)$.   
\end{proof}

The following corollary shows how to cure a collection of infected sites that lie further up the sail.  

\begin{cor}\label{mrclean}
Let $B$ be a good brick with sail $S$ for a configuration $\eta$ on $B$.  Let $V$ be a set of sites that separates $B$ into two connected components $B^+$ and $B^-$.  Let $X_2$ be a set in $B^+\cap \bar{S}$ and let $X_1 = V \cap \bar{S}$.    There exists a sequence of moves of length at most $CL^2$ that begins at $\eta^{X_1\cup X_2\leftarrow i}$ and ends at $\eta^{X_1 \leftarrow i}$ where each configuration in the sequence differs from $\eta$ on a set of size at most $ |X_1| + |X_2| + O(L).$  
\end{cor}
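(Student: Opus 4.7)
The plan is to observe that this corollary is essentially the reverse of \lemref{separator}. Recall that in \lemref{separator} we constructed a legal sequence $\Gamma$ of length $O(L^2)$ starting at $\eta^{X_1\leftarrow i}$ and ending at $\eta^{X_1\cup X_2\leftarrow i}$, with every intermediate configuration differing from $\eta$ on at most $|X_1| + |X_2| + O(L)$ sites.

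The key step is to invoke the reversibility of legal moves already noted before \lemref{backandforth}: if $\Gamma$ is a legal sequence from $\eta'$ to $\eta''$, then $\rev(\Gamma)$ is a legal sequence from $\eta''$ back to $\eta'$. Applied to the sequence from \lemref{separator}, $\rev(\Gamma)$ is a legal sequence starting at $\eta^{X_1\cup X_2\leftarrow i}$ and ending at $\eta^{X_1\leftarrow i}$, of length exactly equal to that of $\Gamma$, namely $O(L^2)$.

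Since $\rev(\Gamma)$ traverses exactly the same intermediate configurations as $\Gamma$ (in reverse order), each one still differs from $\eta$ on at most $|X_1| + |X_2| + O(L)$ sites. This gives all the required properties. I do not expect any real obstacle here: the whole point of stating \lemref{separator} in terms of a sequence of legal moves (rather than just as a reachability statement) was precisely so that the reversibility of the FA dynamics could be exploited to yield both \lemref{separator} and \corref{mrclean} from a single construction.
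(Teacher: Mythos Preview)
Your proposal is correct and matches the paper's own proof essentially verbatim: apply \lemref{separator} to obtain a legal sequence from $\eta^{X_1\leftarrow i}$ to $\eta^{X_1\cup X_2\leftarrow i}$, then take its reverse.
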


\begin{proof}

By Lemma \ref{separator}, there is a sequence of moves that starting from $\eta^{X_1\leftarrow i}$ and ending at $\eta^{X_1\cup X_2 \leftarrow i}.$  Apply the reverse of this sequence.
\end{proof}

Starting from a brick $B$ with some layer in the base of the sail completely infected, we will show how to propagate this infection to a translation of $B$, while curing most of the infected sites in the original brick $B$.


A brick, $B$, \emph{points} to another brick, $B'$, if the tip of $B$ coincides exactly with one of the four sections in the base of $B'$.  This is denoted by $B \triangleright_k B'$ where $k$ is the corresponding section in $B'$ which coincides with the tip of $B$.  


\begin{lem}\label{sail_intersect}
Let $B$ be a good brick that points to a good brick $B'$, with sails $S$ and $S'$ respectively.  Then $|S\cap S'| = O(L)$.
	
\end{lem}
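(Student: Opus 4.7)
My plan is to establish $|S\cap S'|=O(L)$ by exploiting that the pointing condition $B\triangleright B'$ forces the long axes of $B$ and $B'$ to point in different world directions: the tip of $B$ has world dimensions $4L\times 4L\times 16L$ while a base section of $B'$ has world dimensions $4L\times 16L\times 4L$, so the $16L$ direction must be permuted between the two local frames. Hence the two sails, viewed in world coordinates, are two-dimensional surfaces sitting transversally inside the three-dimensional overlap $B\cap B'$ (itself of linear size $O(L)$ in each direction), and we expect them to meet in a one-dimensional set of cardinality $O(L)$.

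I would implement this as follows. First, restrict to $B\cap B'$, since $S\cap S'\subseteq B\cap B'$; this reduces everything to a region that, in world coordinates, has bounded diameter of order $L$. Second, translate the slab condition $\hat{x}_1+\hat{x}_2+\hat{x}_3\in(3L,4L)$ from property (3) of a good brick into world coordinates for each sail. For $B$ in standard orientation this reads $s_1+s_2/4+s_3/16\in(3L,4L)+O(1)$, while for $B'$ (placed so that one base section coincides with the tip of $B$) the analogous condition has a permuted coefficient vector, e.g.\ $s_1/16+s_2+s_3/4\in(7L,8L)+O(1)$. Subtracting the two yields a single linear relation on world coordinates that confines $S\cap S'$ to a slab of thickness $O(L)$ whose normal is transverse to both sails' layer directions.

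Third, I would exploit the path structure inside each sail. In each proto-layer of $B$, the sail is an oriented path from $\{\hat{x}_1=0\}$ to $\{\hat{x}_2=0\}$ with the ``no three consecutive identical steps'' rule, so for any fixed $\hat{x}_2$ the path visits at most three values of $\hat{x}_1$; hence $S$ is essentially the graph of $\hat{x}_1$ as a function of $(\hat{x}_2,\hat{x}_3)$, a surface of $O(L^2)$ cells. The analogous statement holds for $S'$ in the proto-coordinates of $B'$. Combining the graph representations with the transverse linear relation from step two, one sees that for each pair of proto-layers $(k,k')$, one from each brick, the two paths can contribute an overlapping pair of cells only when their ``diagonal heights'' $\hat{x}_1+\hat{x}_2$ at the relevant positions match to within $O(1)$. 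Because each height varies by at most $O(1)$ when the layer index changes by one, the matching condition picks out only $O(L)$ admissible layer pairs, and each admissible pair contributes $O(1)$ overlapping cells of $O(1)$ actual sites apiece, yielding $|S\cap S'|=O(L)$.

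The main obstacle is the last step: a naive count that only uses the two slab constraints gives $O(L^2)$, because each slab has width $O(L)$ and the compatible layer pairs form an $O(L)\times O(L)$ grid. To collapse this to $O(L)$, one must exploit the near-deterministic drift of the diagonal height along each path, forced by the ``no three consecutive'' rule, which makes the height-matching condition restrict the difference $k-k'$ (not merely its range) to an $O(1)$-size set once the heights of both sails are fixed.
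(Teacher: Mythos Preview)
Your intuition is sound—the two sails are transverse 2D surfaces, and the ``no three consecutive'' rule supplies the drift needed to make this quantitative—but the implementation via proto-layer pairs $(k,k')$ leaves a gap. Because $B\triangleright B'$ forces the long axes of the two bricks to lie in different world directions, the indices $k$ and $k'$ run along different coordinate axes (for instance $k$ tracks world $e_3$ while $k'$ tracks world $e_1$), so the expression ``$k-k'$'' has no intrinsic geometric meaning, and it is not clear that your height-matching condition restricts it to an $O(1)$-size set. To make sense of it you would have to unwind both proto-coordinate systems into world coordinates, and at that point you are essentially doing the direct argument below.

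The paper avoids this detour and never uses the slab constraint (property (3)) at all. It simply slices $B\cap B'$ (the tip of $B$) into its $O(L)$ world layers $B_i$ perpendicular to $B$'s long axis. In each such 2D layer, $S\cap B_i$ is a path of units with steps in $\{e_1,\pm 4e_2\}$, while $S'\cap B_i$ is a path of strips with steps in $\{e_2,\,16e_1,\,e_2+16e_1\}$. The ``no three consecutive identical steps'' rule then forces very different growth rates of the $e_1$-coordinate along the two paths: over any four steps the first path gains at most $3$ in $e_1$, while the second gains at least $16$. Two monotone paths in a plane with such disparate drifts meet in $O(1)$ sites; summing over the $O(L)$ layers gives the lemma immediately. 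This is exactly the transversality you were reaching for, but carried out one world layer at a time rather than through proto-layer pairs and slab conditions.
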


\begin{proof}
	Assume without loss of generality the cells of $B$ have orientation given by $(1,4,16)$ while the cells of $B'$ have orientation $(16,1,4)$.  
	
	Fix a layer in $B_i \subset B$ in the tip of $B$.  By definition $X_i = B_i \cap S$ is a path of units, denote $\{u_i\}$ with steps $u_{i+1}-u_i \in \{e_1,4e_2\}$ such that at most three consecutive steps are equal.  
	
Similarly the set $X'_i = S' \cap B_i$ is a path of strips $\{s_i\}$.  Without loss of generality, the step $s_{i+1}-s_i \in \{e_2,16e_1,e_2+16e_1\}$ with at most three consecutive steps of type $e_2$.  

Let us consider the $e_1$ coordinates of $\{u_i\}$ and $\{s_i\}$.  Every four steps the $e_1$ coordinate of $\{u_i\}$ grows by at most 3 while the $e_1$ coordinates of $\{s_i\}$ grows by at least $16$.  Thus, the size of the intersection of $X_i$ and $X'_i$ is $O(1)$ and therefore the size of intersection of $S$ and $S'$ is $O(L)$.  
\end{proof}

The next lemma is an extension of Lemma \ref{separator} which allows us to pass infection from the base of one brick to the tip of the next while staying within an $O(L)$ Hamming distance from the original configuration.

\begin{lem} \label{passiton}

Let $\eta$ be a configuration on $B\cup B'$ such that $B$ and $B'$ are good bricks with respect to $\eta$ such that $B$ points to $B'$ with corresponding oriented sails $S$ and $S'$.  Let $V$ be a separating set in a section of the base of $B$, and $X_1 = V\cap \bar{S}$.  For any subset of sites $X'$ in the tip of $B'\cap \bar{S}'$, there exists a sequence of $O(L^2)$ moves starting from $\eta^{X_1\leftarrow i}$ and ending at $\eta^{X_1 \cup X'\leftarrow i}$ while each configuration in the sequence differs from $\eta$ on a set of size at most $|X_1| + |X_2| + |X'|+O(L)$.
	
\end{lem}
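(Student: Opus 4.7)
The plan is to build the sequence of moves in three forward stages, followed by a censored reverse sweep for cleanup. Call the tip of $B$ (which by hypothesis coincides with a section in the base of $B'$) the \emph{bridge}.

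First, I would apply Lemma~\ref{separator} to the good brick $B$ with the given separator $V$, starting set $X_1 = V \cap \bar{S}$, and target $Z := \bar{S} \cap \mathrm{bridge}$. Since $\mathrm{bridge} = \mathrm{tip}(B)$ lies in the upper component $B^+$, this yields a legal sequence of $O(L^2)$ moves from $\eta^{X_1 \leftarrow i}$ to $\eta^{X_1 \cup Z \leftarrow i}$, where every intermediate configuration differs from $\eta$ on at most $|X_1| + |Z| + O(L)$ sites.

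Second, using $Z$ as a seed, I would infect $Z' := \bar{S}' \cap \mathrm{bridge}$ by adapting the layer-by-layer argument of Lemma~\ref{linebyline} to the orientation of $B'$. The goodness of $B'$ guarantees that each $B'$-layer of $\bar{S}'$ inside the bridge is an oriented path of the required type, and the infected set $Z$ together with the previously processed $B'$-layers provides the infected seeds above each such layer that are needed to iterate the propagation. This produces $O(L^2)$ additional legal moves leading to a configuration where $X_1 \cup Z \cup Z'$ is infected.

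Third, since $\mathrm{bridge}$ is a full section in the base of $B'$ and thus separates $B'$ into two connected components, with $X'$ in the upper one, I would apply Lemma~\ref{separator} to the good brick $B'$ with separator $V' := \mathrm{bridge}$, starting set $V' \cap \bar{S}' = Z'$ (which is infected by the previous stage) and target $X'$. This is another $O(L^2)$ legal moves, after which $X_1 \cup X' \cup Z \cup Z'$ is infected. Finally, I would reverse the forward moves of the first two stages, censoring as in Lemma~\ref{backandforth} so that no site in $X_1 \cup X'$ is ever cured; by Corollary~\ref{mrclean} these censored reversals remain legal and of the same length, and the final configuration is $\eta^{X_1 \cup X' \leftarrow i}$.

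The hard part is the second stage. By Lemma~\ref{sail_intersect} the two sails intersect in only $O(L)$ sites of the bridge, so the infection cannot be handed off pointwise between them and must genuinely propagate across the bridge along $\bar{S}'$ using the constraint mechanism. Establishing rigorously that the goodness of $B'$ provides enough local structure in the bridge to iterate the Lemma~\ref{linebyline}-style argument in $B'$'s orientation, while simultaneously keeping the Hamming distance to $\eta$ under control, is the essential new technical step beyond what the preceding lemmas supply; the remaining stages are then a direct combination of results already available.
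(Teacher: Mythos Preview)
Your proposal overcomplicates the argument and, as written, violates the required Hamming-distance bound. The set $Z = \bar{S} \cap \mathrm{bridge}$ that you infect in stage~1 has size $\Theta(L^2)$: the bridge (= tip of $B$) consists of $\Theta(L)$ layers, and each layer of the sail contains $\Theta(L)$ sites. So already at the end of stage~1 the configuration differs from $\eta$ on $|X_1| + \Theta(L^2)$ sites, far exceeding the bound $|X_1| + |X_2| + |X'| + O(L)$ (where the paper's $X_2$ has size $O(L)$). The same blow-up recurs with $Z' = \bar{S}' \cap \mathrm{bridge}$ in stage~3, since you are using the entire bridge as your separator $V'$. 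This is not a harmless technicality: the $O(L)$ Hamming bound is exactly the $D$ that feeds into Lemma~\ref{lem:path_to_time} and determines the exponent in the time estimate.

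The step you flag as ``the hard part'' is in fact entirely avoidable, and this is the idea you are missing. The sail $S$ of $B$, restricted to the tip, is \emph{itself} a separating set for $B'$: it lies in a single section of the base of $B'$ and cuts $B'$ into two components. Hence one may take as the sole intermediate the set $X_2 := S \cap \bar{S}'$, which by Lemma~\ref{sail_intersect} has size $O(L)$. The paper's proof is then just three steps: apply Lemma~\ref{separator} in $B$ to pass from $\eta^{X_1\leftarrow i}$ to $\eta^{X_1\cup X_2\leftarrow i}$; apply Lemma~\ref{separator} in $B'$ with separator $S$ to pass to $\eta^{X_1\cup X_2\cup X'\leftarrow i}$; and reverse the first sequence (its moves lie in $B$, while $X'$ lies in the tip of $B'$ and hence outside $B$) to reach $\eta^{X_1\cup X'\leftarrow i}$. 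No new bridge-crossing mechanism is needed.
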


\begin{proof}

Let $X_2 = S\cap \bar{S}'$.  From Lemma \ref{separator} there exists a sequence of length at most $O(L^2)$ of moves $\Gamma$ starting at $\eta^{X_1\leftarrow i}$ and ending at $\eta^{X_1\cup X_2 \leftarrow i}$ such that the number of infected sites at any step in the sequence is at most $|I(\eta)| +|X_1| + |X_2|+O(L)$.  By Lemma \ref{sail_intersect}, the size of $S\cap S'$ is at most $O(L)$, and the size of $X_2$ is at most double the size of $S\cap S'$.  Moreover, $S$ is a separating set for $B'$ and thus, again by Lemma \ref{separator}, there exists a sequence of moves, $\Gamma'$ starting from $\eta^{X_1\cup X_2\leftarrow i}$ that ends in $\eta^{X_1\cup X_2\cup  X'\leftarrow i}$ within Hamming distance $|X_1| + |X_2| + |X'| + O(L)$ if $\eta$.  Finally, the sequence $\rev(\Gamma)$ is a legal sequence of moves starting from $\eta^{X_1\cup X_2\cup X'\leftarrow i}$ and ending at $\eta^{X_1\cup X' \leftarrow i}$ since $\eta^{X_1\cup X_2\leftarrow i}$ agrees with $\eta^{X_1\cup X_2\cup X'\leftarrow i}$ on $B$ and the moves of $\Gamma$ are contained entirely in $B$.  The sequence of moves $\Gamma + \Gamma' + \rev(\Gamma)$ satisfies conditions of the lemma and has The length of these three sequence is at most $O(L^2)$.  	
\end{proof}

We specify a sequence of bricks $\trans = \{B^i\}_{i=0}^3$.

\begin{itemize}
\item $B^0 = \left( (0,0,0); (4L,16L,32L) \right)$,
\item $B^1 = \left( (-12L,16L,32L); (20L,12L,16L) \right)$,
\item $B^2 = \left((20L,0,16L);(4L,32L,20L)\right)$,
\item $B^3 = \left((4L,16L,4L);(8L,32L,36L)\right)$.
\end{itemize}

Similarly define the sequence $\trans'= \{{A}^i\}_{i=0}^3$ as:

\begin{itemize}
\item $A^0 = B^0$,
\item $A^1 = B^1$,
\item $A^2 = \left( (4L,0L,16L);(20L,32L,20L)\right)$,
\item $A^3 = \left((16L,16L,4L);(20L,32L,36L) \right)$.
\end{itemize}
 
The sequences satisfy 
$$B^0\points_3 B^1\points_3 B^2 \points_3 B^3$$
and
$$A^0\points_3 A^1\points_3 A^2 \points_3 A^3.$$
and both $B^3$ and $A^3$ are translations of $B^0 = A^0$ See Fig. \ref{bricktranslate}.

\begin{figure}\label{bricktranslate}
\includegraphics[scale=.5]{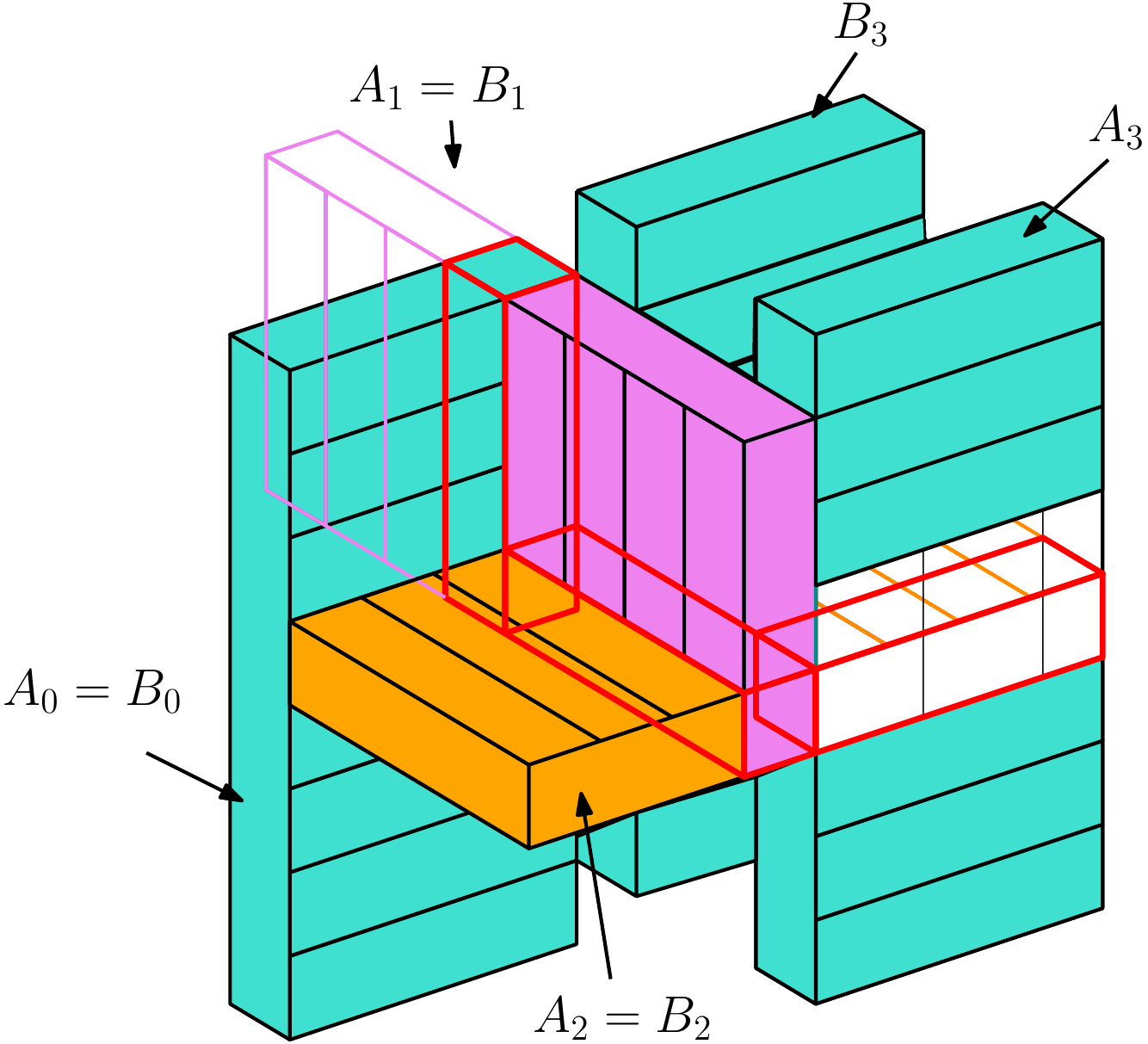}	
\caption{$B^0= A^0$ is the left turquoise brick, $B^1$ and $A^1$ are represented by the pink brick, $B^2$ and $A^2$ are both represented by the orange brick (with different sail orientations), and $B^3$ and $A^3$ are the turquoise translations to the right of $B^0$.  The red outlines are where the tip of a brick coincides with the base othe another brick.}
\end{figure}

Note that $B^2$ and $A^2$ share the same set of sites but have flipped orientations.  Both $\trans$ and $\trans'$ are called translation sequences.  Given certain conditions on the bricks in $\trans$ or $\trans'$, certain infected sites in $B$ can propagate to infected sites in $B^3$ or $A^3$. 

We also define another sequence of bricks called a cleaning sequence, denoted $\clean = \{C^i\}_{i=0}^7$.  This sequence of bricks is used to return the sites in $\trans$ or $\trans'$ outside $B^3$ or $A^3$ to some original state.  We will not specify the cleaning sequence as it is not unique.  Figure \ref{fig:cleaning} gives and example.

\begin{figure}\label{fig:cleaning}
\includegraphics[scale=.5]{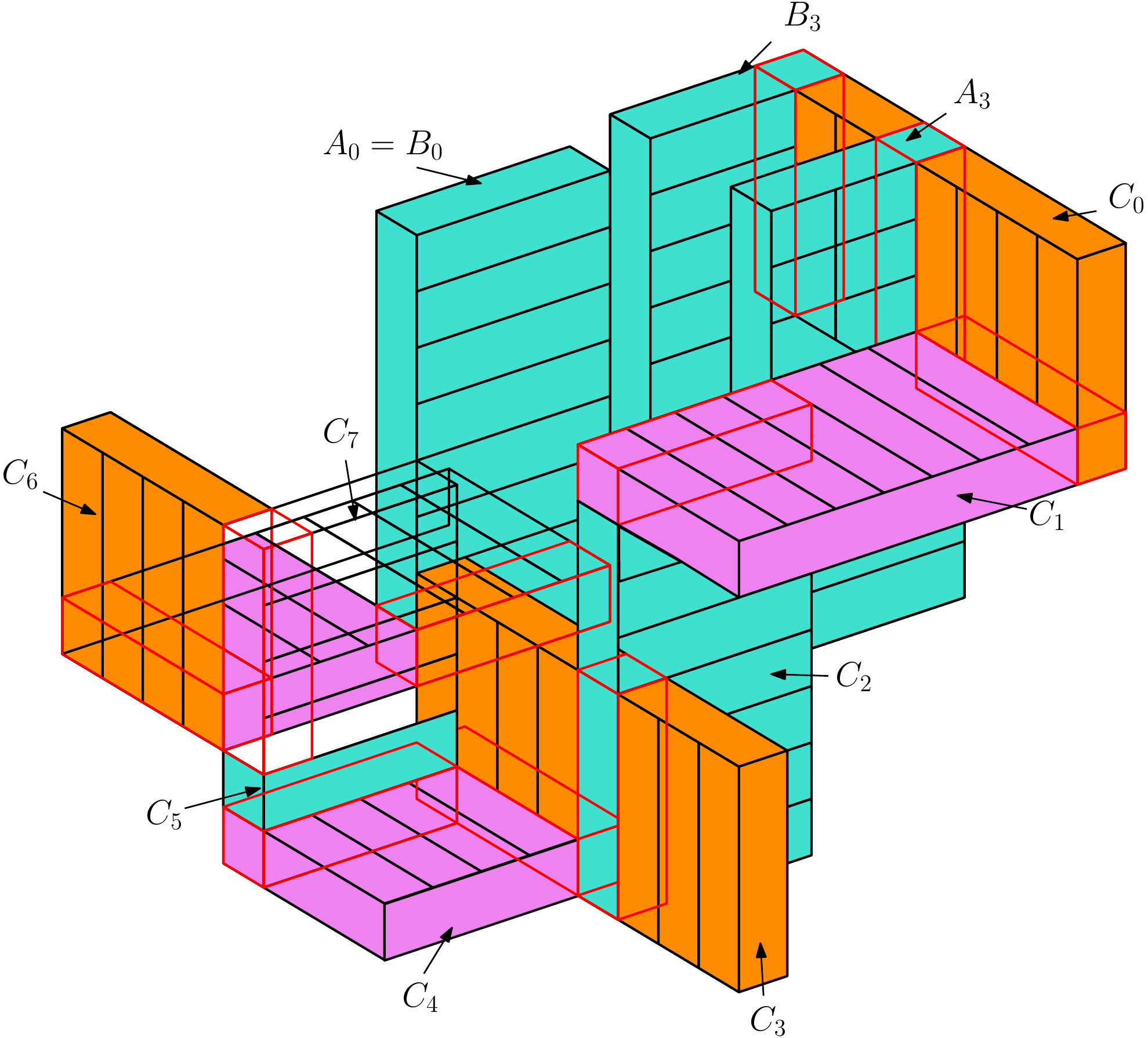}
\caption{A sequence of eight bricks $\{C^i\}$, that facilitate cleaning outside $A^3$ or $B^3$.  The tips of $B^3$ and $A^3$ point to the bottom section and $3$rd section, respectively, of $C^0$.  Starting from $B^3$ this example satisfies, $B^3 \points_0 C^0\points_0 C^1\points_0 C^2\points_3 C^3\points_3 C^4 \points_2 C^5 \points_3 C^6\points_0 C^7 \points_0 B^0.$  The colors correspond to orientation.  Some unused sections of bricks were removed or made transparent to show hidden bricks.  Intersections of tips with sections in the base of subsequent bricks are highlighted in red.}	
\end{figure}

%

Our goal will be to show that we can infect sites in either $B^3$ or $A^3$ given a specific infected set of sites $X$ in section $3$ of $B = B^0 = A^0$ and conditions on the bricks $B^i$ or $A^i$ for $i=0,1,2,3$.  Moreover, once we have spread the infected set of sites from $B$ to either $B^3$ or $A^3$ we can return the configuration outside of $B^3$ or $A^3$ back to its original state but with the sites at $X$ no longer infected.

\begin{lem} \label{allinfamily}

Let $\eta$ be a configuration on the sites of the bricks in $\trans\cup \clean.$  Let the bricks in $\trans$ and $\clean$ be good and have sails pointing in the appropriate direction, denoted by $S^i$ and $R^i,$ respectively, for the sails of the corresponding brick in $\trans$ or $\clean$.

Let $V$ be some a separating set in section 3 of $B^0$ and $X = V\cap \bar{S}^0$.  Let $X' = S^2 \cap \bar{S}^3$.  There exists a sequence of legal moves, $\Gamma$, starting from $\eta^{X\leftarrow i}$ and ending at $\eta^{X' \leftarrow i}$.  The length of $\Gamma$ is at most $O(L^2)$ and each configuration in the sequence differs from $\eta$ on a set of size $O(L).$  A similar statement holds for $\trans'$.     

\end{lem}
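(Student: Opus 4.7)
The plan is to build $\Gamma$ by propagating infection around the loop of bricks $B^0 \to B^1 \to B^2 \to B^3 \to C^0 \to \cdots \to C^7 \to B^0$ formed by $\trans \cup \clean$, and then to exploit the return to $B^0$ in order to cure $X$ while keeping $X'$ infected. The main tools are Lemma \ref{separator} (for propagating infection inside a single good brick along its sail) and Lemma \ref{backandforth} (for censored reversals of legal sequences). Writing $Y_i := S^i \cap \bar S^{i+1}$ so that $Y_2 = X'$, Phase 1 propagates forward through $\trans$: I apply Lemma \ref{separator} successively in $B^0$, $B^1$, $B^2$, each time using the previous sail intersection as the separating set in section $3$ of the current brick and the next one as the target in $B^+$ (with the convention $Y_{-1} := X$). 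After three applications the configuration is $\eta^{X \cup Y_0 \cup Y_1 \cup X' \leftarrow i}$, and $|Y_i| = O(L)$ by Lemma \ref{sail_intersect}.

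Phase 2 continues the same mechanism through $\clean$. I apply Lemma \ref{separator} along the chain $B^3, C^0, \ldots, C^7$, following the pointer relations of the cleaning sequence, advancing through the successive sail intersections $Z_j$ between consecutive bricks, and eventually landing with a separator $\tilde X$ infected in section $0$ of $B^0$ (using $C^7 \points_0 B^0$). The resulting configuration is $\eta^{X \cup Y_0 \cup Y_1 \cup X' \cup Z_0 \cup \cdots \cup Z_7 \cup \tilde X \leftarrow i}$; since all these are sail intersections of size $O(L)$ and there are only a constant number of them, the Hamming distance from $\eta$ remains $O(L)$.

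Phase 3 cures $X$ using the newly placed $\tilde X$ as a source on the opposite side of $B^0$. By Lemma \ref{separator} in $B^0$ with $X_1 = \tilde X$ (a separator in section $0$) and $X_2 = X$ (in section $3$), there is a legal sequence $\Gamma^*$ from $\eta^{\tilde X \leftarrow i}$ to $\eta^{\tilde X \cup X \leftarrow i}$, and its reverse $\rev(\Gamma^*)$ cures $X$. The moves of $\rev(\Gamma^*)$ lie in the portion of $\bar S^0$ strictly between sections $0$ and $3$ of $B^0$, which is disjoint from all the other currently infected sites ($Y_0$ sits in $\text{tip}(B^0)$, the remaining sets lie in other bricks), so $\rev(\Gamma^*)$ remains legal when applied to the full configuration. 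Phase 4 then reverses Phases 2 and 1 (in that order, brick by brick) with censoring of $X'$ via Lemma \ref{backandforth}, curing all intermediate sets $Y_0, Y_1, Z_0, \ldots, Z_7, \tilde X$ while preserving $X'$; each reversed block acts inside a single brick where the still-present sail intersections play exactly the role they did during the forward pass, so the constraints of every reversed move are satisfied. The output is $\eta^{X' \leftarrow i}$.

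The length of $\Gamma$ is $O(L^2)$, since there are $O(1)$ applications of Lemma \ref{separator}, each contributing $O(L^2)$ moves. The Hamming distance from $\eta$ stays $O(L)$ throughout, because only $O(1)$ sail intersections of size $O(L)$ are active at any time. The most delicate step is the legality of the Phase 4 reversal after $X$ has been cured in Phase 3: one must order the reversals so that at each moment the sail sites needed to satisfy the constraints are still infected. This is arranged by reversing Phase 2 first (closest to the still-present $\tilde X$) and then Phase 1 from $B^2$ back to $B^0$, so that the forward-pass sail infections which support each reversed move are present at the time they are needed, and only the censoring on $X'$ is required to keep the target infected at the end.
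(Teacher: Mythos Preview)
Your overall architecture matches the paper's: push infection around the loop $B^0\to\cdots\to B^3\to C^0\to\cdots\to C^7\to B^0$, use the returning separator $\tilde X$ in section~$0$ of $B^0$ to cure $X$, and then unwind. However, your ordering of the cleanup in Phase~4 does not work.

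The problem is the very last reversal. In your forward Phase~1, the step in $B^0$ is an application of Lemma~\ref{separator} with separator $X$ and target $Y_0$. Its reverse, which you need in Phase~4 to cure $Y_0$, is only legal if the separator $X$ is still infected. But you cured $X$ in Phase~3, and then in Phase~4 you reverse Phase~2 \emph{first}, which also removes $\tilde X$. By the time you get to the last step of reversing Phase~1, there is no separator left anywhere in the base of $B^0$, so there is no legal way to cure $Y_0$. Your sentence ``the still-present sail intersections play exactly the role they did during the forward pass'' is precisely false at this step: the sail intersection that played the role of separator in the forward pass was $X$, and it is gone.

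The paper avoids this by reordering: after propagating through $\trans$ it immediately reverses the first two steps (curing $Y_1$ using $Y_0$, then $Y_0$ using the still-present $X$), so that only $X$ and $X'$ remain before the cleaning loop is ever entered. Then it runs the cleaning loop, cures $X$ via Corollary~\ref{mrclean} using $Y^7$, and finally reverses the cleaning loop. Equivalently, one can use Lemma~\ref{passiton} (which already cleans up the intermediate $Y_i$ as it goes) in place of raw Lemma~\ref{separator} in Phase~1. Either way, the essential point you are missing is that the intermediates $Y_0,Y_1$ must be removed \emph{while $X$ is still available as their anchor}, not after.
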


\begin{proof}

This is simply an extension of Lemma \ref{passiton}.  For $i=0,1,2$ each of the sails, $S^i$, are separating sets in the section 3 of the subsequent brick $B^{i+1}$.  For $i=0,1,2$, let $X^i = S^i\cap \bar{S}^{i+1}$ and $\Gamma^i$ be a sequence of legal moves from $\eta^{X^i\leftarrow i}$ that ends at $\eta^{X^i\cup X^{i+1} \leftarrow i}.$  Then $\Gamma^0 + \Gamma^1 + \Gamma^2$ is a legal sequence of moves starting from $\eta^{X^0\leftarrow i}$ that ends at $\eta^{X^0\cup X^1\cup X^2 \cup X^3 \leftarrow i}.$  The reverse of $\Gamma^0+\Gamma^1$ is legal starting from this configuration, ending at $\eta^{X^0 \cup X^3 \leftarrow i}$.  

Repeat a similar argument through the bricks $\clean$.  For $i=0,\cdots ,6$, let $Y^i = R^i \cap \bar{R}^{i+1}$ and let $Y^7=R^7 \cap S^0$.  There is a legal sequence starting from $\eta^{X^0\cup X^3 \leftarrow i}$ that ends at $\eta^{X^0\cup X^3\cup Y^7 \leftarrow i}$.  The sail $R^7$ is a separating set that lies in section 0 of $B^0$.  By Corollary \ref{mrclean}, there exists a sequence of legal moves restricted to the sites in $B^0$ that begins at $\eta^{X^0 \cup X^3 \cup Y^7 \leftarrow i}$ and ends at $\eta^{X^3\cup Y^7 \leftarrow i}.$  The reverse of the sequence that infected $Y^7$ from $X^3$ can now be applied to $\eta^{X^3 \cup Y^7 \leftarrow i}$ to end at the desire configuration $\eta^{X^3 \leftarrow i}$.  

The same argument applies to $\trans'$, using the same set $\clean.$

Altogether the translation of infection to $\eta^{X^3\leftarrow i}$ took at most $O(L^2)$ where each step in the configuration differed from $\eta$ on a set of size $O(L)$.     
\end{proof}

Analogous to the two-dimensional case, we define good paths of bricks and super-good paths of bricks along which infections will propagate.

\begin{defn}
Fix a configuration $\eta$.  Consider a possible bi-infinite sequence in $\zz^3$, denoted by $\cdots z_{-1}, z_0, z_1,\cdots$.  We say that this sequence defines a \emph{good path of bricks} if the following hold:

\begin{enumerate}

\item $z_i + B$ is good,
\item for all $C\in \clean,$ $z_i + C$ is good
\item one of the two following conditions hold:
	\begin{itemize}
		\item either $z_{i+1} + B = z_i + B^3$ and $z_i + B^j$ are good for $j=0,1,2,3.$
		\item or $z_{i+1} +B = z_i + A^3$ and $z_i + A^j$ are good for $j = 0,1,2,3.$
	\end{itemize}
	
\end{enumerate}

\end{defn}

\begin{defn}
Fix a configuration $\eta$ and a good path of bricks $z_{-l}, \cdots z_0$.  We say this path is \emph{super-good} if for some $i$, the brick $z_i + B$ has a sail whose bottom layer is completely infected.  
\end{defn}

\begin{prop}\label{prop:origin_infected}
	Fix a configuration $\eta$.  Let $z_{-l} \cdots z_0 = -(L+1,4L+4,16L)$, be a super good path.  Then there exists a sequence legal moves $\Gamma$ of length $N = O(L^2l)$ such that
	
	\begin{enumerate}
\item $\Gamma^{N}(\eta)(0) = 0$, i.e. the origin is infected the final configuration,
\item for all $i\le N$, $\Gamma^i(\eta)$ differs from $\eta$ on a set of size is at most $O(L)$, 
\item for all $i\leq N$, $\Gamma^i(\eta)$ agrees with $\eta$ on all sites outside $x_i + [O(L)]^2$, where $x_i$ is the location of the $i$th move.
\end{enumerate}
	    
\end{prop}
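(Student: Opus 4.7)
The plan is to chain applications of Lemma \ref{allinfamily} along the super-good path, then finish with a vertical propagation inside the last brick. Let $-l \le j \le 0$ be the index of the super-good brick, so that in $\eta$ the bottom sail-layer $S_0$ of $z_j + B$ is completely infected.

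The construction proceeds in three phases. \emph{Seed}: starting from the infected layer $S_0$ in $z_j + B$, apply Lemma \ref{linebyline} (alternatively, Lemma \ref{separator} with $V$ the entire bottom section) repeatedly to infect successive sail layers up through section $3$, curing behind as we advance so that the configuration stays within Hamming distance $O(L)$ from $\eta$. The result is an infected set $X_j$ of the form $V_j \cap \bar{S}^0$, where $V_j$ separates $z_j + B$ in section $3$; the cost is $O(L^2)$ moves. \emph{March}: for $i = j, j+1, \dots, -1$, the good-path condition supplies a translated family $z_i + \trans$ or $z_i + \trans'$ satisfying the hypotheses of Lemma \ref{allinfamily} with the current separator $V = V_i$. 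Apply that lemma to move $X_i$ to $X_{i+1} := z_i + (S^2 \cap \bar{S}^3)$, which lies in $z_{i+1} + B$. Since $z_i + S^2$ is the sail of $z_i + B^2$ and crosses the tip of that brick, which coincides with section $3$ of $z_{i+1} + B$, the new set $X_{i+1}$ is again a separator-in-section-$3$ intersected with the thick sail of the next brick, and the iteration continues. Each step costs $O(L^2)$ moves at Hamming distance $O(L)$ from $\eta$. \emph{Finish}: after at most $l$ iterations we have an infected $X_0$ in section $3$ of $z_0 + B$, and by condition $5$ of goodness the origin equals $z_0 + (L+1, 4L+4, 16L) \in S^0$; a further invocation of Lemma \ref{separator} (equivalently, of Lemma \ref{linebyline}) propagates the infection up through the tip until the origin becomes infected, establishing conclusion (1).

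The total number of moves is $O(L^2)(l + O(1)) = O(L^2 l) = N$. At every step, activity is confined to one brick $z_i + B$ together with its $O(1)$ associated cleaning bricks from $z_i + \clean$, all contained in a box of side $O(L)$ around the currently active site, yielding the locality condition (3); meanwhile the guarantees of Lemmas \ref{separator} and \ref{allinfamily} keep the Hamming distance from $\eta$ at $O(L)$, yielding (2). The main delicate point is the junction bookkeeping: one must verify that at each iteration the output set $z_i + (S^2 \cap \bar{S}^3)$ has the right geometric form to serve as the input $V \cap \bar{S}^0$ for the next application of Lemma \ref{allinfamily}. This is arranged by the way $\trans$ and $\trans'$ are defined, but requires careful tracking of sail orientations, especially because the good-path definition alternates between $B^3$-type and $A^3$-type translations.
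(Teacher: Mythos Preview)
Your proposal is correct and follows the same approach as the paper, which gives only a one-sentence proof (``This follows from Lemma \ref{allinfamily} by propagating the empty line guaranteed by the path being super-good''). You have filled in the details the paper omits: the seed phase bringing the initially infected bottom layer up to a section-$3$ separator via Lemma \ref{separator}, the march phase chaining Lemma \ref{allinfamily}, and the final step infecting the origin (which lies in the sail of $z_0+B$ by condition~(5) of goodness). Your flagged ``junction bookkeeping'' concern is resolved exactly as you say: the proof of Lemma \ref{allinfamily} itself asserts that each $S^i$ is a separating set in section~$3$ of $B^{i+1}$, so the output $S^2\cap\bar S^3$ is again of the required form $V\cap\bar S^0$ for the next brick.
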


\begin{proof}

This follows from Lemma \ref{allinfamily} by propagating the empty line guaranteed by the path being super-good. 
\end{proof}

Similar to the two-dimensional case, we need to define the scaling.  Let $L = 2\log(1/q)/q$, $l = q^{-L-1}$.  We also define the following events:
\begin{itemize}
\item $E := \{$ there exists super-good path $z_{-l}\cdots z_0 = -(L+1,4L+4,16L)\}$
\item $A := \{ \eta(0) = 0\}$.
\end{itemize}

All that remains to properly define the event $E$ in the case of polluted $\zz^3$, and show that it has high probability.

\begin{lem}\label{goodpathprob}
	The probability that there exists a bi-infinite good path containing $z_0 = -(L+1,4L+4, 16L)$ tends to 1 as $(\pi,q)\to (0,0).$  
\end{lem}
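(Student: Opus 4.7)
The plan is to recognize the existence of a bi-infinite good path as a supercritical oriented percolation problem on a coarse-grained lattice. Consider the sub-lattice of $\zz^3$ generated by the two admissible displacements $(4L,16L,4L)$ and $(16L,16L,4L)$, translated so that $z_0$ sits at its origin. Call a lattice vertex $z$ \emph{open} if the constellation of bricks $\{z+B\}\cup\{z+C:C\in\clean\}$ together with one of the translation families $\{z+B^j\}_{j=0}^{3}$ or $\{z+A^j\}_{j=0}^{3}$ are all good. A bi-infinite good path through $z_0$ corresponds to an oriented open path in this percolation model, so it suffices to show that $z_0$ percolates both forward and backward with probability tending to $1$ as $(\pi,q)\to(0,0)$.

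To bound the probability that a vertex $z$ is open, I would split the good-brick definition into its susceptibility part (items 1--5, depending only on the quenched environment $\omega$) and its infection part (item 6, depending only on $\eta\sim\mu$). The susceptibility event is monotone in $\susc$, and by the single-brick estimates of \cite{GravnerHolroyd_polluted} its probability for one fixed brick tends to $1$ as $\pi\to 0$, uniformly in $q$; applying FKG to the intersection of these monotone events over the $O(1)$ bricks of the constellation preserves this asymptotic. The infection condition requires that each of $O(L)$ relevant layers contains an infected site among $O(L)$ susceptible candidates; since $L=2\log(1/q)/q$, each layer fails with probability at most $(1-q)^{O(L)}\le q^{O(1)}$, and a union bound over $O(L)$ layers in $O(1)$ bricks yields a joint failure probability vanishing as $q\to 0$.

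Openness of $z$ depends on $\omega$ and $\eta$ only through their restrictions to a neighborhood of $z$ of diameter $O(L)$, so the open indicators form a finite-range dependent random field on the coarse-grained lattice. By the Liggett--Schonmann--Stacey domination theorem, this field stochastically dominates i.i.d.\ Bernoulli site percolation with parameter $p(\pi,q)\to 1$. For $p$ sufficiently close to $1$, standard supercritical oriented percolation estimates (e.g., \cite{Grimmett}) give an infinite open forward path and an infinite open backward path from $z_0$, each with probability tending to $1$. Their intersection is an increasing event in two increasing events, so by FKG the joint probability is at least the square of the marginal and also tends to $1$. Concatenating the two half-infinite open paths produces the desired bi-infinite good path through $z_0$.

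The main obstacle is the joint susceptibility estimate: the bricks in $\clean$ and in the translation families overlap geometrically in complicated ways, so their susceptibility events are strongly positively correlated rather than independent, and one must verify that applying FKG together with the single-brick estimate of \cite{GravnerHolroyd_polluted} remains sufficient. This is a geometric-probabilistic computation essentially already present in that reference, but it requires careful bookkeeping to handle the enlarged constellation of bricks appearing in our definition.
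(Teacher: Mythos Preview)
Your proposal is correct and follows essentially the same route as the paper: reduce to oriented percolation on a two-dimensional coarse-grained lattice, show the single-vertex open probability tends to $1$ by combining the susceptibility estimate from \cite{GravnerHolroyd_polluted} (properties 1--5) with a direct computation for property 6, and then invoke Liggett--Schonmann--Stacey to dominate the finite-range-dependent field by supercritical i.i.d.\ oriented percolation. The paper phrases it as edge percolation rather than site percolation and is considerably terser, but the skeleton is identical; your explicit use of FKG to glue the forward and backward half-infinite paths is a detail the paper leaves implicit, and your ``main obstacle'' is milder than you suggest, since a plain union bound over the $O(1)$ bricks in the constellation already suffices without appealing to FKG.
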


\begin{proof}

The probability for a fixed brick to be good tends to 1 as $(\pi,q)\to (0,0).$  Properties 1-5 in the definition of a good brick a satisfied with high probability according to \cite[Proposition 6]{GravnerHolroyd_polluted} while the last property holds with high probability through an argument similar to that found in the proof of Claim \ref{claim:fa2polluted_goodprob}.

Construct the lattice of bricks that are translates of $z_0 + B$.  Two bricks, $z+B$ and $z'+B$, will be connected by an edge if $z,z'$ is a good path of bricks of size two.  This induces an directed edge-percolation process on a two dimensional lattice.  This process has finite range dependencies, thus by LSS \cite{LSS} and the fact that oriented percolation contains an infinite cluster when the percolation parameter is large enough, there exists a bi-infinite good path containing $z_0$ with probability that tends to $1$ as $(\pi,q)\to (0,0)$.   
\end{proof}

\begin{lem} \label{soop_is_on}
The probability that there exists a super-good path of length $l$ ending at $z_0 = -(L+1,4L+4,16L)$ tends to $1$ as $(\pi,q)\to(0,0)$.  	
\end{lem}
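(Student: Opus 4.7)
The plan is to combine \lemref{goodpathprob} with an FKG argument, following the two-dimensional template of \claimref{fa2polluted_supergoodifgood}. By \lemref{goodpathprob}, with probability tending to $1$ as $(\pi,q)\to(0,0)$ there is a bi-infinite good path through $z_0$; from it one extracts a good subpath $z_{-l},\dots,z_0$ of length $l$ ending at $z_0$. It then suffices to show that, conditionally on the existence of such a subpath, with high $\mu$-probability some brick $z_i + B$ in the path has a sail whose bottom layer is completely infected.

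Both the event that $z_{-l},\dots,z_0$ is good and the event that some brick in the path has an infected bottom sail layer are increasing in the set of infected sites --- the only non-structural ingredient of goodness, condition (6) in the definition of a good brick, is itself increasing. The FKG inequality therefore gives
\[
\pp(\text{super-good} \mid \text{good path}) \ge \pp(\exists\, i\,:\, S^i_0 \text{ fully infected}),
\]
reducing the claim to an unconditional $\mu$-bound. For this I would fix a measurable choice, depending only on $\omega$, of a candidate sail $S^i$ for each brick in the path. Using that the translation sequences $\trans,\trans'$ advance the brick by $\Theta(L)$ per step while a sail occupies only a thin slab of the brick, one can thin the path to a subcollection of $\Theta(l)$ bricks whose candidate bottom sail layers $S^i_0$ are pairwise disjoint. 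The events $\{S^i_0\text{ fully infected}\}$ are then $\mu$-independent on this subcollection, each of probability $q^{|S^i_0|}$, giving the bound $(1-q^{|S^i_0|})^{\Theta(l)}$ on the probability that none of them occurs.

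The main obstacle is making this final estimate actually decay for the given parameters $L=2\log(1/q)/q$ and $l=q^{-L-1}$. In the two-dimensional analog the relevant size $|S^i_0|$ matches $L$ exactly, so that $l\,q^{|S^i_0|}=q^{-1}\to\infty$ and the argument closes immediately; in three dimensions the path of units making up $S^i_0$ has length of order $L$ but carries a larger multiplicative constant, so one likely needs either a refined and smaller choice of candidate seed (not necessarily the whole bottom sail layer, but e.g.\ a single strip or unit that still triggers the Lemma~\ref{allinfamily} propagation after minor modification), or a tighter counting of independent ``seed opportunities'' per brick by ranging over many candidate sails simultaneously, to push the final estimate through. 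Either way, the global structure of the argument remains the two-dimensional one: existence of a good path via percolation, plus an FKG-reduction to a Bernoulli computation along it.
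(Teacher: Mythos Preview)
Your approach is exactly the paper's: its entire proof of this lemma is the sentence ``This follows the same argument as in the proof of \claimref{fa2polluted_supergoodifgood}'', i.e., extract a length-$l$ good path from \lemref{goodpathprob}, decouple via FKG, and run the Bernoulli computation along it. On the level of strategy there is nothing to compare.

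Where you go beyond the paper is in flagging the constant mismatch, and you are right to worry. The bottom sail layer $S_0$ is a path of $\Theta(L)$ units of four sites each, so $|S_0|=cL$ with $c$ well above $1$ (in fact $c$ is of order $24$--$32$ from the proto-brick geometry); with the stated $l=q^{-L-1}$ one gets $l\,q^{|S_0|}=q^{(c-1)L-1}\to 0$, and the bound $(1-q^{|S_0|})^{\Theta(l)}$ does not decay. The paper simply does not address this.

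The remedy, however, is simpler than the two workarounds you sketch: just redefine $l=q^{-CL-1}$ for any constant $C$ exceeding the constant in $|S_0|=\Theta(L)$, after which the two-dimensional computation goes through verbatim. This costs nothing in \thmref{z3}, since the only place $l$ enters the final time bound is through $\log N=O(\log l)=O\bigl(L\log(1/q)\bigr)=O\bigl((\log(1/q))^2/q\bigr)$, which is $o(q^{-1-\varepsilon})$ for any fixed $C$. So the gap you identify is real but harmless once a single parameter is adjusted; your suggested refinements (smaller seeds, counting multiple sails) are not needed.
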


\begin{proof}

This follows the same argument as in the proof of Claim \ref{claim:fa2polluted_supergoodifgood}.	
\end{proof}

The proof of Theorem \ref{thm:z3} is direct application of Proposition \ref{prop:yoyoma}, Lemma \ref{soop_is_on}, and Proposition \ref{allinfamily}.   

\section{Questions}
\begin{itemize}
\item Match the upper and lower bound in Theorem \ref{thm:z3}.
\item Can the methods of \cite{GravnerHolroydSivakoff_polluted3d} be used in order to analyze the FA$3$f model on polluted $\zz^3$? More generally, what can we say about the Fredrickson-Andersen model with general threshold in general dimension?
\item The methods introduced above are rather soft, in the sense that they only require some high probability event from which a path that empties the origin could be constructed. It is therefore plausible that they could be applied for models in which other, stronger techniques, fail, e.g., KCMs on finite graphs (whose dynamics is not ergodic).
\item Can we analyze other time scales of the system, and in particular the typical time scale in which time correlation of local funtcions is lost (see, e.g., \cite[section 3.6]{Shapira_thesis})?

\end{itemize}

\section*{Acknowledgements}
We would like to Cristina Toninelli for intoducting us to this problem and to each other, as well as for the useful discussion.

\bibliographystyle{plain}
\bibliography{fa2f_polluted}

\end{document}